\documentclass[a4paper,11pt]{amsart}
\usepackage{graphicx} 
\usepackage{amsfonts, amssymb, amsmath}
\usepackage{amsthm}
\usepackage{mathtools}
\usepackage[margin=1in]{geometry}
\usepackage{enumerate}
\usepackage[all]{xy}
\usepackage{color, xcolor}

\usepackage{tikz, tikz-cd}
\usetikzlibrary{3d}
\usepackage{caption}
\usepackage{subcaption}
\usepackage[all]{xy}

\usepackage{listings}

\newcommand{\PP}{\mathbb{P}}

\newcommand{\QQ}{\mathbb{Q}}

\newcommand{\cX}{\mathcal{X}}

\newcommand{\n}{\noindent}

\theoremstyle{definition}
\newtheorem*{Def}{Definition}
\newtheorem{Thm}{Theorem}[section]
\newtheorem{Prop}[Thm]{Proposition}
\newtheorem{Lem}[Thm]{Lemma}
\newtheorem{Rmk}[Thm]{Remark}

\newtheorem{Conj}[Thm]{Conjecture}
\newtheorem{Ex}[Thm]{Example}

\usepackage{epic}

\title{Weighted Projective Spaces Admitting $\QQ$-Gorenstein Smoothings to $\PP^3$}
\author{Jungkai Alfred Chen and Yongnam Lee}
\date{September 2026}

\address{Department of Mathematics, National Taiwan University, No. 1, Sec. 4, Roosevelt Rd., Taipei 10617, Taiwan}
\email{jkchen@ntu.edu.tw}

\address{Center for Complex Geometry, Institute for Basic Science (IBS), 55 Expo-ro, Yuseong-gu, Daejeon 34126, Korea}
\email{ynlee@ibs.re.kr}

\subjclass[2020]
{Primary 14J10, 
14Q15 
\keywords{Weighted projective space, $\QQ$-Gorenstein smoothing, Hilbert polynomial}}

\begin{document}

\begin{abstract}
We study well-formed weighted projective threefolds that admit $\QQ$-Gorenstein smoothings to $\PP^3$. Two families are known: the $\PP^2$-type and the $Q$-type, and it is conjectured that these are the only possibilities. We derive numerical and local necessary conditions for such a smoothing. In addition to the anticanonical volume equation, constancy of the anticanonical Hilbert polynomial yields a further identity when all codimension two singularities are of $A$-type. We also obtain a semigroup condition governing the existence of global smoothing directions along codimension two curves with transverse $A$-type singularities. We apply these conditions to prove the expected classification in several cases. In particular, for every fixed square-free integer $d$, there are only finitely many $\QQ$-Gorenstein smoothable spaces $\PP(1,a,b,c)$ such that $\gcd(a,b)=d$. Our method reduces the possible weights to a finite exact computation; for every prime $p\le100$, the computation produces only members of the two expected families. Finally, we prove the classification when $\gcd(a,b)=d$ and $a=d^2$. 
\end{abstract}

\maketitle

\section{Introduction}

In this paper, we work over the complex numbers. Weighted projective spaces arise naturally as toric degenerations of projective varieties and provide explicit testing grounds for questions concerning $\QQ$-Gorenstein smoothings. In this paper we consider the following classification problem:
Which well-formed 3-dimensional weighted projective spaces
$X=\PP(w_0,w_1,w_2,w_3)$ admit a $\QQ$-Gorenstein smoothing whose general fibre is $\PP^3$?

Two infinite families are known. The first consists of the weighted projective spaces
\[\PP(a^2,b^2,c^2,abc),
\qquad a^2+b^2+c^2=3abc.\]
We call these weighted projective spaces of $\PP^2$-type. The second consists of
\[\PP(2a^2,b^2,c^2,4abc),
\qquad 2a^2+b^2+c^2=4abc,\]
which we call $Q$-type. It is conjectured by DeVleming \cite{DeV22} that, up to reordering and normalization of the weights, these are the only weighted projective threefolds admitting a $\QQ$-Gorenstein smoothing to $\PP^3$. These weighted projective spaces are related to $\QQ$-Gorenstein degeneration of $\PP^2$ and of a smooth quadric surface
\cite{HP10}. These relations are explained in \cite[Section 3]{DLT} using the cone.

Our purpose is to develop numerical and local tools for this classification, including smoothing section condition on codimension two coordinate curves, and to prove the conjecture in several cases. Let $S=w_0+w_1+w_2+w_3.$

Invariance of the anticanonical volume gives the following equation.
\[S^3=64w_0w_1w_2w_3. \tag{V}\] 
We call this {\it the volume equation}. The volume equation alone is far from sufficient.
Our first additional condition comes from comparing the linear coefficients of the anticanonical Hilbert polynomials. Let
$g_{ij}=\gcd(w_i,w_j),$
and let $w_k,w_\ell$ be the complementary weights. When every codimension-two singularity is of $A$-type, we prove
\[\sum_{i<j}(w_i-w_j)^2
=8\sum_{i<j}(g_{ij}^2-1)w_kw_\ell.
\tag{H}\]
We call (H) {\it the Hilbert identity}. The left-hand side measures the discrepancy among the four weights, while the terms on the right arise from the double poles of the Hilbert series associated with the singular coordinate curves.

The $A$-type congruence is still not sufficient for smoothability. Suppose that
$C_{ij}=\{x_k=x_\ell=0\}$ is a coordinate curve with generic transverse singularity $A_{g_{ij}-1}$. We show that smoothing this curve requires the stronger condition:
$w_k+w_\ell=g_{ij}\delta$ for some positive integer $\delta$, and 
\[g_{ij}\delta\in\Big\langle \frac{w_i}{g_{ij}},\ \frac{w_j}{g_{ij}}\Big\rangle_{\mathbb Z\ge 0}
\quad\text{or}\quad
(g_{ij}-1)\delta\in\Big\langle \frac{w_i}{g_{ij}}, \ \frac{w_j}{g_{ij}}\Big\rangle_{\mathbb Z\ge 0}.\tag{S}\]
This is a global section condition on the first-order smoothing parameter. We call (S) {\it the smoothing-section condition}. 

Our first classification result concerns a weight system containing \(1\) and a pair with fixed prime gcd. For instance, for $X=\PP(1,a,b,c)$ with $\gcd(a,b)=2$ after reordering $a,b,c$, if $X$ admits a $\QQ$-Gorenstein smoothing to $\PP^3$, then
\[X\simeq
\PP(1,1,2,4),\quad
\PP(1,2,9,12),\quad\text{or}\quad
\PP(1,4,10,25).\]
In particular, $X$ is of $\PP^2$-type or $Q$-type.

The proof combines the smoothing-section condition with the volume equation. It reduces the weights to a finite exact calculation.
More generally, for every fixed square-free integer $d$, we prove that there are only finitely many possibilities for a $\QQ$-Gorenstein smoothable space $\PP(1,a,b,c)$ containing a pair of weights with $\gcd(a,b)=d$. Exact calculations for prime $p\le100$ produce only members of the two expected families.

Our main arithmetic classification result is the following.

\begin{Thm}\label{mainA}
Let
\[X=\PP(1,a,b,c)\]be a well-formed weighted projective space. Suppose that 
$\gcd(a,b)=d$ and $a=d^2$. Assume that every codimension-two singularity of $X$ is of $A$-type. If $X$ admits a $\QQ$-Gorenstein smoothing to $\PP^3$, then $X$ is of $\PP^2$-type or $Q$-type.
\end{Thm}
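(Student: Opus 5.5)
The plan is to turn the three necessary conditions (V), (H) and (S) into a Diophantine system in a small number of unknowns. Then we show that the solutions are exactly the Markov-type triples, which give the $\PP^2$-type, or the triples of the form $2x^2+y^2+z^2=4xyz$, which give the $Q$-type. Write $b=db'$ with $\gcd(d,b')=1$. Put $u=\gcd(d^2,c)$ and $v=\gcd(b,c)$; every other pairwise gcd involving the weight $1$ equals $1$. Well-formedness gives $\gcd(a,b,c)=1$, so $\gcd(d,u,v)$-type overlaps are controlled.

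\emph{Step 1.} First apply (S) to the coordinate curve $C_{ab}=\{x_0=x_c=0\}$, which carries transverse singularity $A_{d-1}$. It gives $1+c=d\delta$ together with
\[
d\delta\in\langle d,b'\rangle_{\mathbb Z\ge0}\quad\text{or}\quad(d-1)\delta\in\langle d,b'\rangle_{\mathbb Z\ge0}.
\]
In particular $c\equiv -1 \pmod d$, so $u=1$ and $c$ is coprime to $a=d^2$. Hence the only nontrivial terms on the right of (H) come from $g_{ab}=d$ and $g_{bc}=v$, and (H) becomes
\[
\sum_{i<j}(w_i-w_j)^2=8\bigl((d^2-1)c+(v^2-1)d^2\bigr),
\]
with weights $(1,d^2,db',c)$.

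\emph{Step 2.} If $v>1$, apply (S) also to $C_{bc}$, which carries transverse singularity $A_{v-1}$. This gives $1+d^2=v\delta'$ together with a semigroup condition in $\langle b/v,c/v\rangle$. Since $v\mid b=db'$ and $\gcd(v,d)=1$ (because $v\mid c$ and $\gcd(c,d)=1$), we get $v\mid b'$. Writing $b'=ve$ and $c=vf$ with $\gcd(e,f)=1$, the system becomes three equations, (V), (H) and $1+c=d\delta$, in the unknowns $d,v,e,f,\delta$. I would then subtract a suitable multiple of (V) from (H), after expanding $S=1+d^2+dve+vf$. The aim is to obtain a quadratic relation that is homogeneous after the substitution $c=vf$. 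The expected outcome is that $f$ must equal $v$, i.e.\ $c=v^2$, together with $1+d^2+v^2=3dv$ in the $\PP^2$ case (with $b=dv$). In the $Q$ case one should get $b=4dx$, $c=2x^2$ or an analogous relation.

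\emph{Step 3.} Once such a quadratic relation is reached, check directly that it is equivalent to membership in one of the two families. The relation $1+d^2+e^2=3de$ with weights $(1,d^2,de,e^2)$ is the $\PP^2$-type. The relation $2x^2+1+d^2=4xd$ (requiring $d$ odd), with weights $(1,d^2,4dx,2x^2)$, is the $Q$-type. Here $\gcd(d^2,4dx)=d$, consistent with the hypothesis $\gcd(a,b)=d$.

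The main obstacle is Step 2: extracting from (V) and (H) that $c$ is essentially a square, with $v$ or its $Q$-type analogue as the square root. The volume equation is cubic and not obviously factorizable. My first attempt would be to treat (V) and (H) together as equations in $c$ with $d,b'$ fixed. Eliminating the cubic term gives a quadratic in $c$, whose discriminant must be a perfect square. The semigroup alternative in (S), namely $d\delta$ versus $(d-1)\delta$ lying in $\langle d,b'\rangle$, would then bound $\delta$ relative to $b'$ (by a Frobenius-number estimate, since $\delta$ is comparable to $c/d$). This bound should cut the discriminant condition down to the two Pell-type families. If the case $v=1$ survives, it should be excluded by (H) alone: there the right side $8(d^2-1)c$ is too small compared with the spread of the weights forced by (V).
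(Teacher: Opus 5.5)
Your Step 2 carries the whole theorem, and the mechanism you propose for it cannot work. There are two problems.

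First, the semigroup condition on $C_{ab}$ that you plan to use is vacuous when $a=d^2$. Here $C_{ab}\simeq\PP(d,b')$, so $d\delta=\delta\cdot d\in\langle d,b'\rangle_{\mathbb Z\ge0}$ always holds. In any case, membership in a numerical semigroup can never bound $\delta$ from above.

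Second, the system you actually keep does not imply the conclusion. That system is (V), (H), the $A$-type congruences and $1+c=d\delta$. The weights $(1,d^2,db',c)=(1,9,6,32)$ and $(1,121,22,32)$ satisfy all of these. For the first, $S=48$, $48^3=64\cdot 6\cdot 9\cdot 32$, and $\sum_{i<j}(w_i-w_j)^2=2264=8(8\cdot 32+3\cdot 9)$. Yet $\PP(1,6,9,32)$ and $\PP(1,22,32,121)$ are of neither type. They belong to a genuine third branch $b'=q$, $c=8q^2$, $1+d^2+8q^2=7dq$. This branch is eliminated only by the semigroup condition on $C_{bc}$; for $\PP(1,6,9,32)$ it requires $10\in\langle 3,16\rangle$, which fails. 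You state that condition in Step 2 but then drop it from your system, so no discriminant analysis of (V) and (H) can produce your ``expected outcome''. Likewise, the case $v=1$ cannot be excluded by (H). It contains $\PP^3$ and $\PP(1,1,2,4)$, the latter being $(d,b',c)=(2,1,1)$, and these must be included, not excluded.

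The idea you are missing is that the volume equation does factor here. Since $S^3=(4d)^3b'c$, we get $S=4dm$ and $b'c=m^3$ for a positive integer $m$.

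\begin{itemize}
\item Substituting $S=4dm$ into (H) and using $1+d^2+c=d(4m-b')$ collapses (H) to $b'^2-4mb'+6m^2-2c-v^2=0$. Here $v=\gcd(b',c)$, which equals your $\gcd(b,c)$.
\item A $p$-adic comparison of the terms gives $v\mid m$.
\item Write $m=vn$, $b'=vr$, $c=vs$. The relation forces $v\mid 2s$. With $t=2s/v$ you get $rt=2n^3$ and $\gcd(r,t)=1$, so $(r,t)=(x^3,2y^3)$ or $(2x^3,y^3)$.
\item The relation then becomes the Thue equation $F(x^2,y)=1$ or $F(2x^2,y)=2$, where $F(U,V)=U^3-4U^2V+6UV^2-2V^3$.
\end{itemize}

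Solving these requires a genuine Thue computation. It leaves exactly three branches:
\begin{itemize}
\item $b'=m$, $c=m^2$, which gives the Markov equation and the $\PP^2$-type;
\item $b'=2m$, $c=m^2/2$, which gives the $Q$-type;
\item $b'=m/2$, $c=2m^2$, which is killed by (S) on $C_{bc}$ via a divisibility-by-$8$ argument.
\end{itemize}
None of this structure appears in your plan.
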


In Theorem~\ref{mainA}, we do not assume that $d$ is square free. But we assume that every codimension-two singularity of $X$ is $A$-type. The proof reduces the Hilbert identity to
\[b_0^2-4mb_0+6m^2-2c-e^2=0,\]
where $b=db_0$, $b_0c=m^3$, and $e=\gcd(b_0,c)$. A valuation argument gives $e\mid m$, after which the problem reduces to the cubic Thue equations $F(U,V)=1$ or $F(U,V)=2$ for
\[F(U,V)=U^3-4U^2V+6UV^2-2V^3.\]
The resulting three arithmetic branches give the $\PP^2$-type family, the $Q$-type family, and one additional numerical branch. The last branch is excluded by the smoothing-section condition.

The paper is organized as follows. In Section~2 we derive the
anticanonical Hilbert identity and the smoothing-section
condition. In Section~3 we prove finiteness for fixed square-free
gcd, classify the gcd-$2$ case, and record explicit computations
for small prime gcd. Section~4 is devoted to the main theorem; its
proof uses the volume equation, the Hilbert identity, smoothing-section condition, $p$-adic valuations, and explicit
Thue equations.

\subsection*{Acknowledgements}
J. Chen is partially supported by National Center for Theoretical Sciences and National Science and Technology Council (115-2811-M-002-126) of Taiwan.  Y. Lee is supported by the Institute for Basic Science
(IBS-R032-D1). Y. Lee would like to thank the Department of Mathematics and NCTS at National Taiwan University for the hospitality during his visit in 2026.  The authors would like to thank Kristin DeVleming for some useful discussion and comments.

\section{Hilbert identity and smoothing-section condition}

Let $X=\mathbb{P}(w_0, w_1, w_2, w_3)$ be a well-formed weighted projective space. Since it is well-formed, we have $\operatorname{gcd}(w_i, w_j, w_k)=1$ for distinct $\{i,j,k\} \subset \{0,1,2,3\}$. Details on weighted projective spaces can be found in \cite{BR86, Dol82, IF00}.
In this paper, we will always assume well-formedness. We assume that $X$ is a $\QQ$-Gorenstein degeneration of $\mathbb{P}^3$,  that is, there exists a flat projective  morphism $\pi: \cX\to\Delta$ over a DVR with the central fiber $X$ such that $\pi^{-1}(t)=X_t\cong \PP^3$ for $t\ne 0$ and $K_{\cX/\Delta}$ is $\QQ$-Cartier.

\medskip

{\bf Notation and Convention.} In the sequel, we fix the index $\{i,j,k, \ell\}=\{0,1,2,3\}$ and the index $\{i,j,k\}=\{1,2,3\}$ if $w_0$ is assumed to be $1$. 

Given two positive integers $p$ and $q$, the set $\langle p,q \rangle_{\mathbb{Z} \geq0}$ or simply $\langle p,q \rangle$ stands for the set of integers $\{ \alpha p +\beta q~|~ \alpha, \beta \in \mathbb{Z}_{\ge 0}\}$. 

Let $S=\sum_{i=0}^3w_i$ and $L=\operatorname{lcm}(w_0,w_1,w_2,w_3)$. 
For a well-formed weighted projective space,
\(\operatorname{Cl}(X)\cong \mathbb Z\langle\mathcal O_X(1)\rangle,\) while the Cartier classes form the subgroup
$\operatorname{Pic}(X)
=\mathbb Z\langle\mathcal O_X(L)\rangle
\subset\operatorname{Cl}(X).$ Consequently,
\[\mathcal O_X(d)\ \text{is Cartier}
\quad \text{if and only if} \quad
L\mid d.\]
Therefore \(\mathcal O_X(-rK_X)=\mathcal O_X(rS)\) is Cartier exactly when
\[\mathcal O_X(rS)\ \text{is Cartier}
\quad \text{if and only if} \quad
L\mid rS.\]
The smallest positive integer satisfying this divisibility is $r=\frac{L}{\gcd(L,S)}.$
Therefore,  the Cartier index of \(K_X\) is $r=L/\gcd(S,L)$. 

Let $H_X(m)=\chi\!\left(X,\mathcal O_X(-mrK_X)\right)$. This is  the Hilbert polynomial associated  with the anticanonical $\QQ$-line bundle on $X$ with Cartier index $r$.
Then $H_X(m)$ satisfies the following Hilbert identity. This constancy of the coefficients of $H_X(m)$ coming from the anticanonical $\QQ$-line bundle gives the fundamental equation.

\begin{Lem}\label{Hilbert identity} Let $X=\PP(w_0,w_1,w_2,w_3)$ be a well-formed weighted projective space admitting a $\QQ$-Gorenstein smoothing to $\PP^3$. Then   
\begin{enumerate}[(i)]
\item Anticanonical volume is $64$, that is, 
    \[S^3=(w_0+w_1+w_2+w_3)^3=64w_0w_1w_2w_3.\]
  \item  \[ [t^{mrS}]
\frac{1}{(1-t^{w_0})(1-t^{w_1})(1-t^{w_2})(1-t^{w_3})}
=\binom{4mr+3}{3} \quad \text{for every $m\geq 0$}.\]
 Here the notation $[t^n]H(t)$ means “take the coefficient of $t^n$ in the power-series expansion of $H(t)$”.
\end{enumerate}
\end{Lem}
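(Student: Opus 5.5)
The plan is to compare Euler characteristics on the central fibre and on the general fibre. The family $\pi:\cX\to\Delta$ is flat, and $\mathcal O_{\cX}(-rK_{\cX/\Delta})$ is a line bundle, since $K_{\cX/\Delta}$ is $\QQ$-Cartier. So $m\mapsto\chi(X_t,\mathcal O(-mrK_{X_t}))$ does not depend on $t$.

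\textbf{Step 1: the relative pluri-anticanonical sheaf is a line bundle.} First I need to know that the Cartier index of $K_{\cX/\Delta}$ can be taken to be the index $r$ of $K_X$. This is where the $\QQ$-Gorenstein condition is used in the standard way. Since $X$ is klt, it is Cohen--Macaulay, hence $S_2$, so it suffices to argue locally at each point of $X$. The reflexive sheaves $\mathcal O_{\cX}(nK_{\cX/\Delta})$ restrict to $\mathcal O_X(nK_X)$: this holds by the definition of $\QQ$-Gorenstein deformation (compatibility with base change, Koll\'ar's condition), or via the canonical covering stack. Hence $\mathcal O_{\cX}(-rK_{\cX/\Delta})$ is invertible near $X$, and its restriction is $\mathcal O_X(-rK_X)=\mathcal O_X(rS)$. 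On a general fibre $X_t\cong\PP^3$ it restricts to $\mathcal O_{\PP^3}(4r)$.

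\textbf{Step 2: constancy of Euler characteristics.} By flatness and invertibility, $\chi(X_t,\mathcal L^{m}|_{X_t})$ is locally constant in $t$, where $\mathcal L=\mathcal O_{\cX}(-rK_{\cX/\Delta})$. Two vanishing statements then turn $\chi$ into dimensions of global sections. On $X$, $H^i(X,\mathcal O_X(n))=0$ for $i>0$ and $n\ge0$, by the standard cohomology of weighted projective spaces (or Kawamata--Viehweg, since $-K_X$ is ample and $X$ is klt). On $\PP^3$ the same vanishing holds for $n\ge0$. Therefore, for every $m\ge0$,
\[h^0(X,\mathcal O_X(mrS))=h^0(\PP^3,\mathcal O(4mr))=\binom{4mr+3}{3}.\]
Finally, $h^0(X,\mathcal O_X(n))$ equals the dimension of the degree-$n$ part of $\mathbb C[x_0,\dots,x_3]$ with weights $w_i$. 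That dimension is the coefficient $[t^n]\prod(1-t^{w_i})^{-1}$, which gives (ii).

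\textbf{Step 3: the volume.} To get (i), compare leading coefficients in $m$. The right side of (ii) has leading term $\frac{64r^3}{6}m^3$. The left side is the Hilbert function of the graded ring along the progression $n=mrS$. Its leading coefficient is $\frac{(-K_X)^3}{6}r^3$, where $(-K_X)^3=S^3/(w_0w_1w_2w_3)$ by the standard formula $\mathcal O_X(1)^3=1/\prod w_i$. Alternatively, one can use the quasi-polynomial asymptotics of the coefficient of $\prod(1-t^{w_i})^{-1}$: its degree-3 part is $n^3/(6\prod w_i)$, independent of residue class. Equating the two gives $S^3=64\prod w_i$.

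The main obstacle is Step 1. One must make sure that $\mathcal O_{\cX}(-rK_{\cX/\Delta})$ really commutes with restriction to $X$, with the same $r$. This is exactly the content of the $\QQ$-Gorenstein hypothesis, and I would cite the standard fact that index-one covers deform along $\QQ$-Gorenstein deformations. The rest is routine.
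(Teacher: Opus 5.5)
Your proof is correct. Its skeleton matches the paper's: an invertible multiple of the relative anticanonical sheaf on a flat family has constant Euler characteristic, and you then identify the two sides. The difference is in how the index is handled.

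You require $\mathcal O_{\cX}(-rK_{\cX/\Delta})$ to be invertible for the exact index $r=L/\gcd(S,L)$ of $K_X$. That forces you to know that reflexive powers of $\omega_{\cX/\Delta}$ commute with restriction to $X$. Under the paper's definition, which only asks that $K_{\cX/\Delta}$ be $\QQ$-Cartier, this does not hold by definition; it is a theorem. One proof: inversion of adjunction makes $\cX$ klt near $X$, so the divisorial sheaves $\mathcal O_{\cX}(nK_{\cX})$ are Cohen--Macaulay, and then Nakayama finishes. Alternatively, cite the Koll\'ar--Shepherd-Barron index-one cover argument. Note that the Cohen--Macaulayness you need is that of $\mathcal O_{\cX}(nK_{\cX})$ on $\cX$, not that of $X$.

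The paper sidesteps this step, which you called the main obstacle. It takes any sufficiently divisible $N=kr$ with $NK_{\cX/\Delta}$ Cartier. The restriction of that invertible sheaf to the normal variety $X$ is automatically $\mathcal O_X(-NK_X)$, since both are reflexive and agree on the smooth locus by adjunction. This gives $\chi(X,\mathcal O_X(mrS))=\binom{4mr+3}{3}$ only for $m\in k\mathbb Z_{\ge 0}$. The paper then extends to all $m$ because both sides are polynomials in $m$, as $\mathcal O_X(rS)$ is Cartier. So your route buys a stronger statement (the relative index is exactly $r$) that is true but never used.

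On the other hand, you make explicit two things the paper leaves as ``easy to see'':
\begin{enumerate}[(a)]
\item the vanishing $H^i(X,\mathcal O_X(n))=0$ for $i>0$ and $n\ge 0$, which identifies $\chi$ with the coefficient of the Hilbert series;
\item the comparison of cubic coefficients, using $\mathcal O_X(1)^3=1/\prod w_i$ or the leading term of the quasi-polynomial, which is how (i) actually follows.
\end{enumerate}
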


\begin{proof}
A sufficiently divisible multiple of the relative anticanonical divisor is a line bundle on the whole $\QQ$-Gorenstein family. Its Euler characteristic is constant in a flat family, so
\[H_X(m):=\chi\left(X,\mathcal O_X(-mrK_X)\right)=
\chi\left(\PP^3,\mathcal O_{\PP^3}(4mr)\right)=:H_{\PP^3}(m).\]
Both $H_X(m)$ and $H_{\PP^3}(m)$ are polynomials in $m$ of degree at most $3$. The preceding equality implies that the polynomial
$H_X(m)-H_{\PP^3}(m)$ has infinitely many roots  so it must be identically zero. Therefore
$H_X(m)=H_{\PP^3}(m)$ for every $m \ge 0$. 
It is easy to see that the LHS of (ii) computes $H_X(m)$ and the RHS of (ii) computes $H_{\PP^3}(m)$. 
\end{proof}

\begin{Rmk}
The cubic and quadratic coefficients of the anticanonical Hilbert polynomial are equivalent.
For a threefold, Riemann–Roch begins
\[\chi(X,\mathcal O_X(mD))=
\frac{D^3}{6}m^3
-\frac{K_XD^2}{4}m^2+\cdots.\]
Taking $D=-rK_X$ makes both coefficients multiples of $(-K_X)^3$:
\[D^3=r^3(-K_X)^3,\qquad
-K_XD^2=r^2(-K_X)^3.\]
Therefore both coefficients contain the same numerical invariant, the anticanonical volume.
\end{Rmk}

For every singular edge $(w_i,w_j)$, put $g_{ij}:=\gcd(w_i,w_j)$, with complementary weights $w_k,w_\ell$. The next lemma shows some necessary condition when $X=\mathbb{P}(w_0, w_1, w_2, w_3)$ admits a smoothing to $\PP^3$.

\begin{Lem}\label{smoothing}
\begin{enumerate}[(i)]
\item If $w_i >1$, then $g_{ij} >1$ for some $j\ne i$.
\item If $d:=g_{ij} >1$ and $d$ is square-free then $X$ has only $A_{d-1}$ singularities along general points of the curve \(C_{ij}=\{x_k=x_\ell=0\}\subset X\).
\item Suppose that the generic transverse singularity of $X$ along $C_{ij}$ is $A_{d-1}$. Then 
\(w_k+w_\ell= d \delta \) for some positive integer \(\delta\), and 
\[d \delta\in\Big\langle \frac{w_i}{d},\ \frac{w_j}{d} \Big\rangle_{\mathbb Z_{\ge0}}
\quad\text{or}\quad
(d-1)\delta\in\Big\langle \frac{w_i}{d}, \ \frac{w_j}{d} \Big\rangle_{\mathbb Z_{\ge0}}. \]

In particular, if $d=2$, then $w_k+w_\ell=2\delta\in\Big\langle \frac{w_i}{2},\ \frac{w_j}{2}\Big\rangle_{\mathbb Z_{\ge0}}$.
\end{enumerate}
\end{Lem}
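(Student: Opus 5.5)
The three parts are essentially independent, and I will prove them in order using the local toric structure of $X$ along its coordinate strata, together with the fact that a $\QQ$-Gorenstein smoothing must smooth every singular stratum.

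For (i), suppose $w_i>1$ but $g_{ij}=1$ for all $j\ne i$. Then the coordinate point $P_i=[0:\dots:1:\dots:0]$ is an isolated cyclic quotient singularity $\frac{1}{w_i}(w_j,w_k,w_\ell)$. It is isolated because no coordinate curve through $P_i$ is singular: each such curve has generic stabilizer $\mu_{g_{ij}}$, which is trivial. Well-formedness and $w_i>1$ make it a genuine singularity, and it is of dimension three. By Schlessinger rigidity, isolated three-dimensional quotient singularities are rigid. Hence no deformation of $X$ can smooth $P_i$, which contradicts the smoothing to $\PP^3$.

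For (ii), at a general point of $C_{ij}$ the stabilizer is $\mu_d$ acting on the transverse coordinates $x_k,x_\ell$ with weights $(w_k,w_\ell)$. Well-formedness gives $\gcd(d,w_k)=\gcd(d,w_\ell)=1$, so the transverse singularity is the surface cyclic quotient $\frac1d(1,q)$ with $q\equiv w_\ell w_k^{-1}\pmod d$. Along a curve this singularity must be $\QQ$-Gorenstein smoothable in the transverse direction. In the family, a general hyperplane section through a point of $C_{ij}$ gives a $\QQ$-Gorenstein deformation of the surface germ, and the total space is smooth in codimension two of the generic fibre. By Kollár–Shepherd-Barron, the germ is therefore either an $A_{d-1}$ singularity ($q\equiv -1$) or a T-singularity $\frac{1}{dn^2}(1,dna-1)$. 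A T-singularity that is not of type $A$ has order $dn^2$ divisible by a square $n^2>1$. Since $d$ is square-free, only $n=1$ is possible, which gives $A_{d-1}$.

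For (iii), I work locally along $C_{ij}$, where $X$ is analytically $\{xy=z^d\}\times C$ with $x=x_k^d$, $y=x_\ell^d$ and $z=x_kx_\ell$. The $A_{d-1}$ condition says $w_k+w_\ell\equiv 0 \pmod d$, so that $x_kx_\ell$ is $\mu_d$-invariant, and this gives $w_k+w_\ell=d\delta$. The first-order $\QQ$-Gorenstein deformations of the transverse $A_{d-1}$ are $xy=z^d+\sum_s t_sz^s$. Globally, the coefficient $t_s$ must be a section of a weighted line bundle on $C_{ij}\cong\PP(w_i/d,w_j/d)$, namely the degree of $x_kx_\ell$-monomials matched to weight. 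Concretely, the smoothing term in degree $d(w_k+w_\ell)/d\cdot\ldots$ has weight $d\delta$ or $(d-1)\delta$ in the normalized coordinates. I would compute this by writing the local $T^1$ sheaf $\mathcal T^1_X|_{C_{ij}}$ as $\bigoplus_{s}\mathcal O_{C_{ij}}(\text{weight})$ and identifying which summands can give a nonvanishing section. The relevant candidates are the constant term, of weight $d\delta$, and the $z^{d-1}$ term, of weight $(d-1)\delta$ after quotienting by the automorphism that kills $z^{d-1}$. A nonzero section of $\mathcal O_{\PP(w_i/d,w_j/d)}(n)$ exists iff $n\in\langle w_i/d,w_j/d\rangle$. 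A smoothing must be generically nonzero on the smoothing component along $C_{ij}$, which yields the stated dichotomy. For $d=2$ we have $(d-1)\delta=\delta$, and $\delta$ is dominated by the term of weight $2\delta$, so the statement collapses as claimed.

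The main obstacle is (iii): correctly identifying the line-bundle twists of $\mathcal T^1$ along $C_{ij}$, and arguing that a global first-order smoothing must have a nonvanishing component in one of exactly these two summands.
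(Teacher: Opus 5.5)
The gap is in (iii), which is the real content of the lemma; (i) and (ii) are fine. Your (i) is the paper's argument. For (ii), your transversal-slice argument plus Koll\'ar--Shepherd-Barron is a reasonable self-contained substitute for the paper's bare citation of DeVleming--Li--Torres, Corollary 2.21: a T-singularity $\frac{1}{dn^2}(1,dna-1)$ of square-free order forces $n=1$. It needs the standard fact that a general slice of a $\QQ$-Gorenstein family is again a $\QQ$-Gorenstein smoothing of the germ.

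Your setup for (iii) matches the paper. Use the normalization $\deg x=w_k$, $\deg y=w_\ell$, $\deg z=\delta$, i.e.\ original degrees divided by $d$. Then $xy-z^d$ is homogeneous of degree $d\delta$, so the coefficient $t_s$ of $z^s$ must be a section of $\mathcal O_{C_{ij}}((d-s)\delta)$ on $C_{ij}\cong\PP(w_i/d,w_j/d)$.

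You misidentify the second candidate. The coefficient of degree $(d-1)\delta$ is $t_1$, the coefficient of the \emph{linear} term $z$. The $z^{d-1}$ term is exactly the one removed by the translation $z\mapsto z-t_{d-1}/d$. It does not even survive in $T^1=\mathbb C[z]/(z^{d-1})$, and its coefficient would have degree $\delta$, not $(d-1)\delta$. So that sentence contradicts itself, and the weight computation you left as ``$d(w_k+w_\ell)/d\cdot\ldots$'' is never actually carried out.

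More substantively, you never say why only $t_0$ and $t_1$ matter. Your criterion, ``a smoothing must be generically nonzero on the smoothing component,'' does not yield the dichotomy. A deformation with, say, only $t_2\neq0$ gives a nonzero section of $\mathcal T^1$ along $C_{ij}$, yet it does not smooth the curve. The missing step is the paper's key observation. If $t_0=t_1=0$, then $z^2$ divides $z^d+\sum_{s\ge 2}t_sz^s$. So each nearby fibre has local equation $xy=z^2g$, all of whose partial derivatives (including the one along the curve direction) vanish on $x=y=z=0$. The general fibre therefore stays singular along the curve. Hence a smoothing forces $t_0\neq0$ or $t_1\neq0$, and nonvanishing sections exist exactly when $d\delta\in\langle w_i/d,w_j/d\rangle$ or $(d-1)\delta\in\langle w_i/d,w_j/d\rangle$.

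Once that is in place, your $d=2$ remark is acceptable if read as the semigroup fact $\delta\in\langle w_i/2,w_j/2\rangle\Rightarrow 2\delta\in\langle w_i/2,w_j/2\rangle$. The paper instead notes that for $d=2$ the linear term is removed by completing the square, so only $t_0$ survives.
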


\begin{proof}
(i) If $g_{ij}=1$ for all $j\ne i$ then the point $P_i$ is an isolated quotient singularity.  But a three-dimensional isolated quotient singularity is rigid \cite{Sch}, so $X$ cannot admit a smoothing.

(ii) is proved in \cite[Corollary 2.21]{DLT}.

(iii) Let $C_{ij}=\{x_k=x_\ell=0\}\simeq\PP(b_i,b_j), \ 
b_i=\frac{w_i}{d},\ b_j=\frac{w_j}{d},$ where $d=\gcd(w_i,w_j)>1$.

Because the singularity is of type $A_{d-1}$,
$w_k+w_\ell=d \delta$ for some positive integer $\delta$:

After normalizing the $\mu_d$-action, let $x,y$ be coordinates with action
$(x,y)\longmapsto(\zeta x,\zeta^{-1}y).$ The invariant coordinates are
\[u=x^d,\qquad v=xy,\qquad w=y^d,\]
and satisfy $uw=v^d.$ Then the degrees along $C_{ij}$ are
\[\deg u=w_k,\qquad
\deg w=w_\ell,\qquad
\deg v=\delta.\]
Thus both sides of $uw=v^d$ have degree
$w_k+w_\ell=d \delta.$

A general deformation of $A_{d-1}$ can be written, after eliminating the $v^{d-1}$-term, as
\[uw-\left(v^d+s_{d-2}v^{d-2}+\cdots+s_1v+s_0\right)=0.\]
Homogeneity requires $s_j\in H^0(C_{ij},\mathcal O_{C_{ij}}((d-j)\delta))$.

If both $s_0=0$ and $s_1=0,$ then the polynomial on the right is divisible by $v^2$.
Consequently, the point $u=w=v=0$ remains singular. Thus smoothing the generic transverse singularity requires
$s_0\ne0 \ \text{or}\ s_1\ne0.$
If $s_0\ne0$, then $H^0(C_{ij},\mathcal O(d\delta)) \ne 0$. Since  $C_{ij}\simeq\PP(b_i,b_j)$, this is equivalent to
$d\delta\in\langle b_i,b_j\rangle$. If $s_1\ne0$, then $H^0 (C_{ij},\mathcal O((d-1)\delta))\ne0$, equivalently,
$(d-1)\delta\in\langle b_i,b_j\rangle$.

For $d=2$, the miniversal normal form contains only the constant parameter: $uw-v^2+s_0=0.$ Therefore
$2\delta=w_k+w_\ell\in\langle b_i,b_j\rangle$. 
\end{proof}

\begin{Def}
We say that $X=\mathbb{P}(w_0,w_1,w_2,w_3)$ has $A$-type  singularities in codimension two if 
 $X$ has $A_{g_{ij}-1}$-singularities along the general points of the corresponding coordinate curve $C_{ij}$ whenever $g_{ij} >1$.

This is equivalent to $w_k \equiv-w_{\ell}\pmod {g_{ij}}.$ Thus, $A$-type in codimension two means:
\[g_{ij}\mid w_k+w_\ell
\quad\text{for every pair }i \ne j.\]
\end{Def}

\begin{Ex}
    The following two known families admit $\QQ$-Gorenstein smoothings to $\PP^3$. These two series satisfy the conditions in Lemma~\ref{Hilbert identity} and Lemma~\ref{smoothing}.
    \begin{enumerate} 
    \item $(a^2, b^2, c^2, abc)$, where $a^2+b^2+c^2=3abc$;
    \item $(2a^2, b^2, c^2, 4abc)$, where $2a^2+b^2+c^2=4abc$;
    \end{enumerate} 
    The first one is called $\PP^2$-type, and the second one is called $Q$-type. 
      It is straightforward to check that if a weighted projective space $X$ is $\PP^2$-type or $Q$-type then  every codimension-two singularity is $A$-type in $X$.

\end{Ex}

\begin{Conj} (\cite[Conjecture 4.64]{DeV22}) Every weighted projective space admitting $\QQ$-Gorenstein smoothings to $\PP^3$ is of $\PP^2$-type or $Q$-type. 
    
\end{Conj}

\begin{Lem}\label{linear} Let $X=\PP(w_0,w_1,w_2,w_3)$ be a well-formed weighted projective space admitting a $\QQ$-Gorenstein smoothing to $\PP^3$.  
Suppose every codimension-two singularity is $A$-type in $X$. Then invariance of the anticanonical Hilbert polynomial gives
\[\sum_{i<j}(w_i-w_j)^2=
8\sum_{i<j}(g_{ij}^2-1)w_kw_\ell. \tag{H}\]
\end{Lem}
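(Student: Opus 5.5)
The plan is to compare the linear coefficients in $m$ of the two sides of Lemma~\ref{Hilbert identity}(ii), computing the left-hand side by a partial-fraction (residue) decomposition of the Hilbert series
\[P(t)=\prod_{i=0}^3\frac{1}{1-t^{w_i}}.\]
For $n\ge 0$ the coefficient $[t^n]P(t)$ is a quasi-polynomial in $n$, and it splits as a sum of contributions from the poles of $P$, all of which lie at roots of unity:
\begin{enumerate}[(a)]
\item the pole of order $4$ at $t=1$ contributes a genuine cubic polynomial in $n$;
\item a primitive $e$-th root of unity $\zeta$ with $e>1$ dividing exactly two weights $w_i,w_j$ has a pole of order $2$, since $e\mid g_{ij}$ and well-formedness forbids $e$ from dividing a third weight; it contributes a term (linear in $n$)$\cdot\zeta^{-n}$;
\item all other roots of unity are simple poles and contribute periodic constants.
\end{enumerate}
We evaluate at $n=mrS$. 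Since $rS$ is a multiple of $L$, every $\zeta^{-n}$ equals $1$ and all these contributions become honest polynomials in $m$. In particular the simple poles in (c) do not affect the linear coefficient.

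\textbf{Step 1 (pole at $t=1$).} Substitute $t=e^{-s}$ and expand
\[P=\frac{1}{W s^4}\prod_i\frac{w_is}{1-e^{-w_is}},\qquad W=w_0w_1w_2w_3,\]
using the Todd series $\frac{x}{1-e^{-x}}=1+\frac x2+\frac{x^2}{12}+\cdots$. The coefficient of $s^2$ in the product is
\[\frac{\sum w_i^2}{12}+\frac{e_2}{4}=\frac{S^2+e_2}{12},\qquad e_2=\sum_{i<j}w_iw_j .\]
Converting the Laurent expansion into a coefficient of $t^n$ by the standard formula $[t^n](1-t)^{-k}=\binom{n+k-1}{k-1}$, I obtain the $n^3$, $n^2$ and $n$ coefficients. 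These are $\frac{1}{6W}$, $\frac{S}{4W}$, and an explicit expression in $S^2$, $e_2$ and $W$. This step is mechanical but must be done carefully, since the lower-order terms of $\binom{n+3}{3}$ mix with the Todd coefficients.

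\textbf{Step 2 (order-two poles).} Fix a singular edge with $g=g_{ij}>1$ and a primitive $e$-th root $\zeta$, where $e\mid g$ and $e>1$. Near $t=\zeta$ the two factors $1-t^{w_i}$ and $1-t^{w_j}$ vanish simply, with derivatives $-w_i\zeta^{-1}$ and $-w_j\zeta^{-1}$. The leading coefficient of the $n$-linear term is therefore
\[\frac{1}{w_iw_j}\cdot\frac{1}{(1-\zeta^{w_k})(1-\zeta^{w_\ell})}.\]
The $A$-type hypothesis $w_\ell\equiv -w_k \pmod g$, together with $\gcd(w_k,g)=1$, turns the second factor into $|1-\eta|^{-2}$ with $\eta=\zeta^{w_k}$. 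As $\zeta$ runs over all $g$-th roots of unity $\ne1$, so does $\eta$. I then use the classical identity
\[\sum_{\eta^g=1,\ \eta\ne1}\frac{1}{|1-\eta|^2}=\frac{g^2-1}{12}.\]
So the edge $C_{ij}$ contributes $n\cdot\frac{g_{ij}^2-1}{12\,w_iw_j}=n\cdot\frac{(g_{ij}^2-1)w_kw_\ell}{12W}$ to the linear coefficient.

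\textbf{Step 3 (compare and simplify).} Set $n=mrS$ and equate the coefficient of $m$ with the linear coefficient of $\binom{4mr+3}{3}$, namely $\frac{11}{6}\cdot 4r$. Using the volume equation (i), $S^3=64W$, I eliminate $W$ in favour of $S$. Multiplying through by $12W/(rS)$ should then leave
\[S^2-\tfrac{8}{3}e_2\ \text{(or a similar combination)}=\tfrac{2}{3}\sum_{i<j}(g_{ij}^2-1)w_kw_\ell .\]
Rewriting via $\sum_{i<j}(w_i-w_j)^2=3S^2-8e_2$ gives (H).

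I expect the main obstacle to be bookkeeping rather than ideas. The delicate points are getting the exact linear coefficient from the order-$4$ pole, and verifying that the order-$2$ poles contribute no additional linear terms beyond the leading one; the subleading part of a double pole is a constant times $\zeta^{-n}$, hence harmless. As a safeguard, I would check the final identity on $\PP^3$ itself, where $g_{ij}=1$ and both sides vanish, and on $\PP(1,1,2,4)$.
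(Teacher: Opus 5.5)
Your proposal is correct and is essentially the paper's proof: you use the Todd expansion at $t=1$, whose $n$-coefficient is just the $s^{-2}$ Laurent coefficient $\frac{S^2+e_2}{12W}$ (read off most directly as the residue of $P(e^{-s})e^{ns}$, as the paper does), the $\csc^2$ sum $\frac{g^2-1}{12}$ at the double poles, and the volume equation to eliminate $W$. The only slip is the constant in your tentative final display: the bookkeeping gives $S^2+e_2+\sum_{i<j}(g_{ij}^2-1)w_kw_\ell=\frac{11}{8}S^2$, i.e.\ $S^2-\frac83e_2=\frac83\sum_{i<j}(g_{ij}^2-1)w_kw_\ell$ (not $\frac23$), and this is exactly (H), as your own check on $\PP(1,1,2,4)$ would confirm.
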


\begin{proof}
It follows by comparing the linear coefficients of the anticanonical Hilbert polynomials.
Let
\[S=\sum_{i=0}^3w_i,\qquad
P=\prod_{i=0}^3w_i,\qquad
E_2=\sum_{i<j}w_iw_j.\]
Choose \(r>0\) so that \(rS\) is divisible by $L$ the lcm of $w_i$s. Then
$-rK_X=\mathcal O_X(rS)$ is Cartier, and by Lemma \ref{Hilbert identity}, 
\[H_X(m)=[t^{mrS}]\frac1{\prod_{i=0}^3(1-t^{w_i})}=
\binom{4rm+3}{3}.\]

We aim to extract the coefficient by using the Residue Theorem 
\[ [t^N]F(t)= \frac{1}{2 \pi i} \int F(t) t^{-N-1}dt.\]
Then we will compare the coefficient of the term linear in $m$. 

We consider $F(t)=\prod_{i=0}^3 \frac{1}{(1-t^{w_i})}$. Note that $F(t)$ has a pole or order $4$ at $t=1$ and some poles of order $2$ when $t$ is $d$-th root of unity where $d=\gcd(w_i,w_j)>1$. There is no pole of order $3$ since $\gcd(w_i,w_j, w_k)=1$. 

To compute the integral around $t=1$, we consider the transformation $t=e^{-z}$. Then 
\[ \frac{1}{2 \pi i} \int F(t) t^{-N-1}dt =  \frac{1}{2 \pi i} \int F(e^{-z}) e^{Nz}dz \]

Since
\[\frac1{1-e^{-wz}}=\frac1{wz}+\frac12+\frac{wz}{12}+O(z^3).\]
We have
\[\begin{array}{ll}
F(e^{-z})&=\frac1{Pz^4}\prod_{i=0}^3
\left(1+\frac{w_i z}{2}+\frac{w_i^2z^2}{12}+O(z^3)\right) \\
&=\frac{1}{Pz^4} \prod \left(1+\frac{S}{2}z+ \left( \frac1{12}\sum_iw_i^2+\frac14\sum_{i<j}w_iw_j \right)z^2+O(z^3)\right). \end{array}\]
Since $\sum_iw_i^2=S^2-2E_2,$ the coefficient of $z^2$ becomes
$\frac{S^2-2E_2}{12}+\frac{E_2}{4}
=\frac{S^2+E_2}{12}.$
Therefore
\[F(e^{-z})=\frac1P\left(z^{-4}
+\frac{S}{2} z^{-3}+\frac{S^2+E_2}{12} z^{-2}+c_{-1}+O(1)
\right), \] for some constant $c_{-1}$. 
Hence the residue of $F(e^{-z})e^{Nz}$ at $z=0$ is given by \[\frac{N^3}{6P}+\frac{SN^2}{4P}+\frac{(S^2+E_2)N}{12P}+c_{-1}.  \tag{2.1}\]

We next compute directly the contributions from the nontrivial double poles.
Fix a pair $i,j$, and let $d=g_{ij}>1$. At every nontrivial $d$-th root of unity $\zeta$, the Hilbert series has a double pole coming from
$(1-t^{w_i})(1-t^{w_j}).$ Its contribution to the coefficient of $N$ is
\[\frac{N}{w_iw_j}\frac{\zeta^{-N}}{(1-\zeta^{w_k})(1-\zeta^{w_\ell})}.\] 
Because $N=mrS$ is divisible by $L$, and the order of every pole divides $L$, we have $\zeta^N=1$.
By Lemma~\ref{smoothing},
$w_k+w_\ell\equiv0\pmod d.$ Also, well-formdness gives
$\gcd(w_k,d)=\gcd(w_\ell,d)=1.$

Write every nontrivial \(d\)-th root of unity as
$\eta_\lambda=e^{2\pi i\lambda/d}, \ \ \lambda=1,\dots,d-1.$ Then
$(1-\eta_\lambda)(1-\eta_\lambda^{-1})=2-2\cos\left(\frac{2\pi \lambda}{d}\right)=4\sin^2\left(\frac{\pi \lambda}{d}\right)$.
Therefore
$\frac1{(1-\eta_\lambda)(1-\eta_\lambda^{-1})}=
\frac14\csc^2\left(\frac{\pi \lambda}{d}\right)$.

Consequently,
\[ \sum_{\substack{\zeta^d=1\\\zeta\ne1}}
\frac1{(1-\zeta^{w_k})(1-\zeta^{w_\ell})}=
\sum_{\substack{\eta^d=1\\\eta\ne1}}
\frac1{(1-\eta)(1-\eta^{-1})}
=\frac{d^2-1}{12}. \]
The final equality follows from $\sum_{\lambda=1}^{d-1}\csc^2\left(\frac{\pi \lambda}{d}\right)=\frac{d^2-1}{3}.$

Thus the edge $i,j$ contributes 
\[\frac{N(g_{ij}^2-1)}{12w_iw_j}=
\frac{N}{12P}(g_{ij}^2-1)w_kw_\ell. \tag{2.2}\]

The computation of all other simple poles of $F$ give a constant $c$ which is independent of $N$. 

Combining (2.1) and (2.2), the coefficient of $N$ in the total residue is
\[\frac1{12P}\left(S^2+E_2+
\sum_{i<j}(g_{ij}^2-1)w_kw_\ell\right).\]
Since $N=mrS$, 
the coefficient of $m$ in $H_X(m)$ is
$\frac{rS}{12P}
\left(S^2+E_2+B\right)$, where
$B=\sum_{i<j}(g_{ij}^2-1)w_kw_\ell$.

On the other hand,
$\binom{4rm+3}{3}=
\frac{(4rm)^3}{6}+(4rm)^2+\frac{11}{6}(4rm)+1$.
So the coefficient of $m$ is
$\frac{22r}{3}.$ Hence
\[\frac{rS}{12P}(S^2+E_2+B)=\frac{22r}{3}. \]
By using the volume condition 
$S^3=64P$, one has $8(E_2+B)=3S^2$.
Together with
\[3S^2-8E_2=3\sum_iw_i^2-2\sum_{i<j}w_iw_j\
=\sum_{i<j}(w_i-w_j)^2,\]
it follows that 
\[ 8\sum_{i<j}(g_{ij}^2-1)w_kw_\ell=8B=\sum_{i<j}(w_i-w_j)^2\] as desired. 
\end{proof}

\begin{Ex}\label{7abc}
Any 4-tuple $(2a^2, 4b^2, c^2, abc)$ with $2a^2+4b^2+c^2=7abc$ and odd $a,b,c$ has only $A$-type singularities in codimension-two and satisfies the volume equation. There are infinitely many solutions. 

Consider Pell's equation of the form $x^2-33b^2=-8$. One branch of solutions is $x_n+b_n \sqrt{33}=(17+3\sqrt{33})(23+4\sqrt{33})^n$ for $n \ge 0$.
Set $a:=1, b:=b_n, c:=\frac{1}{2}(7b_n+x_n)$, then $(a,b,c)$ is a solution to $2a^2+4b^2+c^2=7abc$.

Notice that $x_{n+1}=23x_n+132b_n$ and $b_{n+1}=4x_n+23b_n$. One can easily check that $(x_n,b_n) \equiv (1,3)$ or $(3,1)$ modulo $4$. It follows that both $b$ and $c$ are odd. 

We set again $a=1$. Then we have 4-tuple $(2, 4b^2, c^2, bc)$ with $2+4b^2+c^2=7bc$ and odd $b,c$. We have to check singularities $\frac{1}{b}(2, c^2)$ and $\frac{1}{c}(2, 4b^2)$ are $A_{b-1}$ and $A_{c-1}$, respectively. By using $2+4b^2+c^2=7bc$, $\frac{1}{b}(2, c^2)$ (resp. $\frac{1}{c}(2, 4b^2)$)  is the same as $\frac{1}{b}(1, -1)$ (resp. $\frac{1}{c}(1, -1)$). Thus every codimension-two singularity is of $A$-type.
\end{Ex}

\begin{Rmk}
The weighted projective spaces in Example~\ref{7abc} except $\PP(1,1,2,4)$ fail (ii) in Lemma~\ref{Hilbert identity}.
Let
\[X=\PP(2a^2,4b^2,c^2,abc),\qquad 2a^2+4b^2+c^2=7abc,\]
where $a,b,c$ are odd. The weights are well formed, so $a,b,c$ are pairwise coprime. Therefore
\[S=2a^2+4b^2+c^2+abc=8abc\]
and $L=\operatorname{lcm}(2a^2,4b^2,c^2,abc)
=4a^2b^2c^2.$ Hence the Cartier index is
\[r=\frac{L}{\gcd(L,S)}=\frac{4a^2b^2c^2}{4abc}=abc.\]

Condition (ii) in Lemma~\ref{Hilbert identity} requires
\[[t^{8ma^2b^2c^2}]\frac1{(1-t^{2a^2})(1-t^{4b^2})(1-t^{c^2})(1-t^{abc})}-
\binom{4mabc+3}{3}
=\frac{m}{6}\left(a^2+2b^2+2c^2-5abc\right)=0.\]
Thus condition (ii) in Lemma~\ref{Hilbert identity} holds precisely when
$a^2+2b^2+2c^2=5abc.$ Combining this with $2a^2+4b^2+c^2=7abc$ shows that only the  solution $\PP(1,1,2,4)$ is possible.
\end{Rmk}

\begin{Ex}
Let $X_n=\PP\!\left(a_n^2,\;2a_na_{n+1},\;a_{n+1}^2,\;32\right)$ be a well-formed weighted projective space satisfying $a_{n+2}=14a_{n+1}-a_n,\ (a_0,a_1)=(1,3)$. Then $X$ satisfies both the volume equation and equation (1.2).

Put $a=a_n,\ b=a_{n+1}$. The recurrence preserves
\[a^2-14ab+b^2+32=0. \tag{*}\]
We have $S=a^2+2ab+b^2+32$ and $P=a^2(2ab)b^2\cdot32=64a^3b^3$. By $(*)$, $S=16ab$, therefore $S^3=64P$.

Since every $a_n$ is odd and $\gcd(a_{n+1},a_{n+2})=\gcd(a_{n+1},14a_{n+1}-a_n)=\gcd(a_{n+1},a_n)=1$.
the nontrivial pairwise gcds are $g_{01}=a,\ g_{12}=b,\ g_{13}=2$, while the other pairwise gcds are $1$.
\[\sum_{i<j}(w_i-w_j)^2-8\sum_{i<j}(g_{ij}^2-1)w_kw_\ell=
\bigl(a^2-14ab+b^2+32\bigr)\bigl(3a^2+38ab+3b^2+96\bigr)=0,\]
since the first factor vanishes by $(*)$. Therefore $X$ satisfies the equation (1.2). But $X$ does not satisfy the condition (iii) in Lemma~\ref{Hilbert identity}. 
Therefore this example gives an  infinite sequence of non-smoothable examples. This example is due to DeVleming.
\end{Ex}

\begin{Rmk}
    Among the list of 23 cases in \cite[Proposition 2.24]{DLT}, 13 cases fail the condition (ii) in Lemma~\ref{Hilbert identity}. The remaining ten cases satisfy condition:
\[\begin{gathered}
(1,1,1,1),\ (1,1,2,4),\ (1,2,9,12),\ (1,4,10,25),\\
(1,6,9,32),\ (1,9,50,60),\ (1,22,32,121),\\
(1,25,65,169),\ (1,50,289,340),\ (2,9,121,132).
\end{gathered}\]
Among these remaining ten cases, two cases, $\PP(1,6,9,32)$
and $\PP(1,22,32,121)$ satisfy Lemma 2.6 but are excluded by Lemma~\ref{smoothing} (iii). The other eight are $\PP^2$-type or $Q$-type.
\end{Rmk}

So by this explicit computation, we get the following.
\begin{Prop}
A well-formed weighted projective space $\PP(w_0, w_1, w_2, w_3)$ admitting a $\QQ$-Gorenstein smoothing to $\PP^3$, with $w_i\le 800$ for each $i$, is the
$\PP^2$-type or the $Q$-type.
\end{Prop}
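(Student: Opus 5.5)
The plan is to reduce the Proposition to a finite, exact computer enumeration of weight quadruples, and then sort the survivors with the necessary conditions already proved. I will normalize $w_0\le w_1\le w_2\le w_3\le 800$ and enumerate all quadruples with these bounds.

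The filters, applied from cheapest to most expensive, are as follows.

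\begin{enumerate}[(a)]
\item Impose well-formedness: $\gcd(w_i,w_j,w_k)=1$ for all distinct triples.
\item Impose the volume equation $S^3=64w_0w_1w_2w_3$ of Lemma~\ref{Hilbert identity}(i).
\item Impose Lemma~\ref{smoothing}(i): every weight $w_i>1$ must share a nontrivial factor with some other weight.
\item Impose the full Hilbert-polynomial condition Lemma~\ref{Hilbert identity}(ii).
\end{enumerate}

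For (b), we do not loop over all four weights. Since $S=4\sqrt[3]{P}$, we have $S\equiv 0\pmod 4$ and $P=(S/4)^3$ is a perfect cube. So we loop over $w_0\le w_1\le w_2$, and for each choice we solve the single cubic $(w_0+w_1+w_2+w_3)^3=64w_0w_1w_2w_3$ in $w_3$ exactly over the integers in $[w_2,800]$. This is at most $800^3/6$ triples, each requiring an integer root test, which is quite feasible.

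Filter (d) needs care, because it is an identity for every $m\ge 0$. I will reduce it to finitely many checks. Both sides of (ii) are quasi-polynomials of degree $\le 3$ in $m$, and for $N=mrS$ with $L\mid N$ the left side is an honest polynomial in $m$. It therefore suffices to check equality for $m=0,1,2,3$. Equivalently, one can compare coefficients via the residue expansion used in the proof of Lemma~\ref{linear}: the cubic and quadratic coefficients agree by (i), so only the linear and constant terms need checking. Computing these coefficients requires evaluating $[t^{N}]\prod(1-t^{w_i})^{-1}$ for $N$ up to $3rS$, which can be large. Instead I will compute the coefficient with the closed-form partial-fraction sums over roots of unity of orders dividing the pairwise gcds, or by exact Ehrhart-type counting with rational arithmetic.

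The survivors of (a)--(d) are compared with \cite[Proposition 2.24]{DLT}. I expect, and must verify by the computation, that they are exactly the ten tuples listed in the preceding Remark. For the two spurious survivors, $(1,6,9,32)$ and $(1,22,32,121)$, I apply Lemma~\ref{smoothing}(iii).

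For $(1,6,9,32)$: the pairs $(6,32)$ and $(6,9)$ have gcds $2$ and $3$, both square-free. By Lemma~\ref{smoothing}(ii) the transverse singularities are of $A$-type, so (iii) applies. Take the edge $(6,32)$ with $d=2$. Then $w_k+w_\ell=1+9=10$, so $2\delta=10$ and we need $10\in\langle 3,16\rangle$, which fails.

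For $(1,22,32,121)$: take the edge $(22,32)$ with $d=2$. Then $1+121=122$, and we need $122\in\langle 11,16\rangle$. Here $122=11\cdot 2+16\cdot 6.25$ is not integral; checking the residues $122-16\beta$ for $\beta=0,\dots,7$, none is divisible by $11$, so this fails too.

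The remaining eight survivors are then matched explicitly with $\PP^2$-type or $Q$-type parameters $(a,b,c)$, for example $(1,4,10,25)=(2\cdot 1^2,\,2^2,\,5^2,\,4\cdot 1\cdot 2\cdot 5)$ reordered.

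The main obstacle is making the enumeration provably exhaustive, and in particular making (d) rigorous rather than sampled. That is why I plan to use the exact coefficient comparison (linear and constant terms via residues) instead of numerical expansion.
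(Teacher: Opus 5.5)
Your endgame is the paper's: impose Lemma~\ref{Hilbert identity}(ii), then exclude $\PP(1,6,9,32)$ and $\PP(1,22,32,121)$ with Lemma~\ref{smoothing}(iii) on their gcd-$2$ edges, and match the remaining eight. Your checks $10\notin\langle 3,16\rangle$ and $122\notin\langle 11,16\rangle$ are correct. The difference is where the finite list comes from. The paper does not enumerate; it starts from the $23$ candidates of \cite[Proposition 2.24]{DLT} and observes that $13$ of them fail (ii).

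Your from-scratch enumeration would make the argument independent of that list. Your reduction of (ii) to finitely many checks is also sound: the cubic and quadratic coefficients follow from (i), and the constant term is automatically $1$ because the polynomial agrees with the count at $m=0$. So only the linear coefficient needs computing. However, your claim that (a)--(d) leave exactly the ten tuples is only a prediction. Unless the $23$ tuples of \cite{DLT} were cut out by precisely your filters (a)--(c), extra tuples may survive. A survivor with a non-square-free gcd may also have a non-$A$ transverse singularity, and then neither Lemma~\ref{smoothing}(iii) nor the identity (H) applies. You should therefore state explicitly that any survivor not in \cite[Proposition 2.24]{DLT} is discarded by that proposition. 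At that point your argument reduces to the paper's.

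One concrete error: $\PP(1,4,10,25)$ is not of $Q$-type. Your parameters give $(2\cdot 1^2,2^2,5^2,4\cdot1\cdot2\cdot5)=(2,4,25,40)$. That is a different tuple, it is not well-formed, and $2+4+25\ne 40$. In fact $(1,4,10,25)=(1^2,2^2,5^2,1\cdot2\cdot5)$ is of $\PP^2$-type, coming from the Markov triple $(1,2,5)$. It has no $Q$-type parametrization, since neither even weight $4$ nor $10$ has the form $2a^2$.
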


\section{Finiteness for a fixed gcd}

We begin with the following lemma which shows that there are only finitely many possibilities under some conditions.

\begin{Prop}\label{finite}
Let $X=\PP(1,a,b,c)$ and $\gcd(a,b)=d$ where $d$ is square-free. If $X$ admits a $\QQ$-Gorenstein smoothing to $\PP^3$ then
there are only finitely many possibilities for $X$ with a fixed integer $d$.
\end{Prop}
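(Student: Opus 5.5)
The plan is to turn the local conditions of Lemma~\ref{smoothing} into strong divisibility restrictions on $(a,b,c)$, and then use the volume equation of Lemma~\ref{Hilbert identity}(i) to confine the weights to finitely many values once $d$ is fixed.

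\emph{Step 1: local conditions along the edge $(a,b)$.} Write $a=da'$ and $b=db'$ with $\gcd(a',b')=1$. Since $d$ is square-free, Lemma~\ref{smoothing}(ii) says the transverse singularity along $C_{ab}$ is $A_{d-1}$. Lemma~\ref{smoothing}(iii) then gives
\[1+c=d\delta\quad\text{and}\quad k\delta=\alpha a'+\beta b'\ \text{ with }k\in\{d,d-1\},\ \alpha,\beta\in\mathbb Z_{\ge0}.\]
Well-formedness gives $\gcd(c,d)=1$, and $\gcd(a,b,c)=1$.

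\emph{Step 2: the remaining gcd structure.} By Lemma~\ref{smoothing}(i) applied to the vertex of weight $c$, we have $c>1$ and $c$ shares a nontrivial factor with $a$ or with $b$. I would write $a=d\,e_1a''$, $b=d\,e_2b''$ and $c=e_1e_2c''$, where $e_1=\gcd(a,c)$ and $e_2=\gcd(b,c)$; these are pairwise coprime with each other and with $d$, by well-formedness. Next I substitute into $S^3=64abc$ with $S=1+a+b+c$. Since $S\equiv 1+c\equiv 0 \pmod d$, the cube $S^3$ is divisible by $d^3$ while $64abc$ carries exactly $d^2\cdot(\text{unit at }d)$ up to powers of $2$. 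Comparing $p$-adic valuations for $p\mid d$ (using that $d$ is square-free) and for $p\mid e_1e_2$ should force
\[a'',\ b'',\ c''\ \text{to be bounded in terms of } d,\]
so that the weights become, up to bounded factors, products of squares as in the two families.

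\emph{Step 3: a Markov-type equation and descent.} After these substitutions the volume equation should read as a Markov-type equation in $(e_1,e_2,\ldots)$ with coefficients depending only on $d$ and on finitely many bounded cofactors. For each such equation I would run Vieta jumping to reduce every solution to a fundamental solution of bounded size. Then I use the semigroup condition of Step 1 to rule out all but finitely many branches: a jump multiplies $c$ roughly by a large factor while leaving $\delta$ tied to $a',b'$. Together with $d$ being fixed, this forces $\alpha a'+\beta b'=k\delta$ to fail once the solution is large, because the representation would need $\min(a',b')\le k\delta$ with specific residues modulo $a',b'$ that are incompatible along an infinite orbit.

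\emph{Main obstacle.} The hard step is Step 3, because the volume equation is asymptotically homogeneous and the weight $1$ is only a lower-order perturbation, so there may be infinite Markov-type orbits. The finiteness must come from the semigroup condition (S) eliminating all but finitely many members of each orbit. My first attempt is to track $\delta \bmod a'$ and $\delta \bmod b'$ along the recursion and show that eventually $k\delta<a'b'-a'-b'$ together with a non-representability congruence. If that fails, I would fall back on showing that for fixed $d$ only finitely many orbits contain a triple with $\gcd(a,b)=d$ exactly, and bounding each orbit directly.
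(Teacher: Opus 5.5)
\textbf{There is a genuine gap: after Step~1 your proposal never produces a bound.}

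Step~2 says that comparing $p$-adic valuations in $S^3=64abc$ ``should force'' $a'',b'',c''$ to be bounded in terms of $d$. Valuation identities constrain exponents, not sizes, so they cannot bound anything. The structural conclusion you aim for, that the weights are squares up to bounded factors, is essentially the content of the conjecture itself. Compare the paper's final remark, where even $m=de$ is left open.

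Your intermediate assertion is also false. The product $64abc=64d^2a'b'c$ need not carry exactly $d^2$ at primes $p\mid d$. For odd $p\mid d$ we have $v_p(c)=0$, so $3v_p(S)=2+v_p(a'b')$, which forces $p\mid a'b'$. For example, $\PP(1,2,9,12)$ with $(a,b)=(9,12)$ has $d=3$ and $a'=3$.

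Step~3 relies on a Markov-type equation you have not derived. It also relies on an orbit/Vieta-jumping analysis that you explicitly leave unresolved. So no finiteness argument is actually given.

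\textbf{The missing idea: the semigroup condition gives a direct archimedean bound, with no descent.}

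Take the alternative $1+c=d\delta=\alpha a'+\beta b'$ and set $u=\alpha+d$, $v=\beta+d$. Then $u,v\ge d$ and $S=1+da'+db'+c=ua'+vb'$. Since $c<S$, the volume equation and AM--GM give
\[
64d^2=\frac{S^3}{a'b'c}>\frac{(ua'+vb')^2}{a'b'}\ge 4uv ,
\]
so $d\le u,v<16d$. Reduce $(ua'+vb')^3=64d^2a'b'c$ modulo $a'$ and modulo $b'$, and use $\gcd(a',b')=1$. This gives $a'\mid v^3$ and $b'\mid u^3$. Hence $a',b'$ are bounded, and then so is $c=S-1-da'-db'$.

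The other alternative, $(d-1)\delta=\alpha a'+\beta b'$, works the same way with $u=\alpha+d-1$ and $v=\beta+d-1$. Now $ua'+vb'=\frac{d-1}{d}S$, and the volume equation becomes $d(ua'+vb')^3=64(d-1)^3a'b'c$. This yields $uv<16(d-1)^2$, $a'\mid dv^3$ and $b'\mid du^3$.

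Your intuition that (S) is what cuts off the infinite families is correct. The mechanism, however, is this inequality, not congruences tracked along a Markov orbit.
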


\begin{proof}
Write
$a=da_0, b=db_0$ with $\gcd(a_0,b_0)=1$.
The volume equation gives $(1+da_0+db_0+c)^3=64d^2a_0b_0c$. Any prime divisor of $d$ is clearly a prime divisor of $(1+c)^3$ hence a prime divisor of $1+c$. Thus,  $d \mid 1+c$ and we may write  ${1+c}={d} \delta$ for some positive integer $\delta$. The smoothing condition in Lemma~\ref{smoothing} (iii) gives two cases.

\medskip
\n Case 1: $d\delta=1+c\in\langle a_0,b_0\rangle$.\\
So $1+c=\alpha a_0+ \beta b_0 $ for some $\alpha, \beta \geq 0$. Let $u:=\alpha+d, v:=\beta+d$. 
Then $u, v \geq d$, and $S=1+da_0+db_0+c=ua_0+ vb_0$. 
The volume equation again gives
\[ (ua_0+ vb_0 )^3=64d^2a_0 b_0c.
\tag{3.1} \]
Since $c<S=ua_0 +vb_0$,
\[ 64d^2=\frac{(ua_0 +vb_0)^3}{a_0b_0c}
> \frac{(ua_0 +vb_0)^2}{a_0 b_0}\geq 4uv. \]
Therefore $uv<16d^2.$ Because $u,v \geq d$, this implies $d\leq u, v < 16d.$ 

Also, equation (3.1) modulo $a_0$ (resp.  $b_0$ ) gives
$a_0\mid v^3$ (resp $b_0\mid u^3$). 
Therefore,
\[a_0\leq v^3 <2^{12}d^3,\qquad
b_0\leq u^3 < 2^{12}d^3. \]
Finally, $c=ua_0+v b_0 < 2^{17}d^4$. Hence, there are only finitely many possibilities for $(a_0,b_0,u,v,c)$, and therefore finitely many weighted projective spaces.

\medskip
\n Case 2: $(d-1)\delta\in\langle a_0,b_0\rangle.$\\
Suppose $(d-1)\delta=\alpha a_0+ \beta b_0.$
Set $u=\alpha+d-1,\ v=\beta+d-1.$
Since $S=1+da_0+db_0+c=d(a_0+b_0+\delta),$ we have
$ua_0+vb_0=(d-1)(a_0+b_0+\delta)=\frac{d-1}{d}S.$
Now the volume equation gives
\[d(ua_0+vb_0)^3=64(d-1)^3a_0b_0c. \tag{3.2}\]
Since $c<S$,
\[64d^2=\frac{S^3}{a_0b_0c}> \frac{S^2}{a_0b_0}
=\frac{d^2}{(d-1)^2}\frac{(ua_0+vb_0)^2}{a_0b_0} \ge \frac{4d^2}{(d-1)^2} uv.\]
We thus obtain $uv < 16(d-1)^2$ and hence $u, v <16(d-1)$.

Similarly, one has $a_0\mid dv^3,\ b_0\mid du^3$ by modulo equation (3.2) with $a_0,b_0$ respectively.
Consequently, $a_0 \le dv^3, b_0\le du^3$ and both are bounded above by $2^{12}d(d-1)^3$. Thus, $c<S< 2^{17}d^2(d-1)^3$. Therefore, finiteness still follows.
\end{proof}

\begin{Prop}
Let $X=\PP(1,a,b,c)$ be a well-formed weighted projective space. Suppose that $\gcd(a,b)=2.$ If $X$ admits a $\QQ$-Gorenstein smoothing to $\PP^3$, then
\[X\cong
\PP(1,1,2,4),\qquad
\PP(1,2,9,12),\qquad\text{or}\qquad
\PP(1,4,10,25).\]
Consequently, $X$ is of $\PP^2$-type or $Q$-type.
\end{Prop}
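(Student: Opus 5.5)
We specialize the proof of Proposition~\ref{finite} to $d=2$ and then make the finite search explicit. Write $a=2a_0$ and $b=2b_0$ with $\gcd(a_0,b_0)=1$. Well-formedness forces $c$ to be odd, and the volume equation gives $2\mid 1+c$, so $1+c=2\delta$. Since $d=2$ is square-free, Lemma~\ref{smoothing}(ii) says the generic transverse singularity along $C_{12}$ is $A_1$. The special case $d=2$ of Lemma~\ref{smoothing}(iii) then gives
\[1+c=2\delta=\alpha a_0+\beta b_0\]
for some $\alpha,\beta\ge 0$. Only Case~1 of Proposition~\ref{finite} occurs, because the alternative $(d-1)\delta\in\langle a_0,b_0\rangle$ is not available when $d=2$. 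Put $u=\alpha+2$ and $v=\beta+2$. Then $S=ua_0+vb_0$, and the volume equation becomes
\[(ua_0+vb_0)^3=256\,a_0b_0c.\]
The argument of Proposition~\ref{finite} gives $uv<64$ with $2\le u,v<32$, and also $a_0\mid v^3$ and $b_0\mid u^3$.

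Next I reduce to a finite search. For each of the finitely many pairs $(u,v)$ with $u,v\ge2$ and $uv<64$, $a_0$ runs over divisors of $v^3$ and $b_0$ over divisors of $u^3$, subject to $\gcd(a_0,b_0)=1$. For each such tuple, $c$ is determined:
\[c=\frac{(ua_0+vb_0)^3}{256a_0b_0},\]
and it must be an odd positive integer. We also need $c=ua_0+vb_0-1-2a_0-2b_0$, which is the equation $1+c=\alpha a_0+\beta b_0$ rewritten. Imposing these two expressions for $c$ simultaneously is already very restrictive: it gives
\[(ua_0+vb_0)^3=256\,a_0b_0\bigl((u-2)a_0+(v-2)b_0-1\bigr).\]

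To cut the search down by hand rather than by computer, set $s=ua_0+vb_0$. Then $c=s^3/(256a_0b_0)<s$ gives $s^2<256a_0b_0$. Combined with the AM--GM inequality $s^2\ge 4uv\,a_0b_0$, this recovers $uv<64$. Also $2^8\mid s^3$ forces $8\mid s$ up to the $2$-adic parts of $a_0b_0$. Since $c$ is odd and $\gcd(c,a_0b_0)$ is restricted by well-formedness, I will track $2$-adic valuations: for instance, if $a_0$ and $b_0$ are both odd, then $v_2(s^3)=8$ is impossible because $8$ is not divisible by $3$. So exactly one of $a_0,b_0$ is even, say $a_0=2^k a_1$, and then $3v_2(s)=8+k$. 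This fixes $k\equiv1\pmod 3$ and sharply constrains $(u,v)$ modulo powers of $2$.

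After these reductions, the remaining cases should be checkable by a short table. The surviving tuples should be exactly $(a,b,c)$ up to order giving $\PP(1,1,2,4)$, $\PP(1,2,9,12)$ and $\PP(1,4,10,25)$. In $\PP(1,1,2,4)$ the pair with gcd $2$ is $(2,4)$, with $c=1$. Finally, I identify these spaces with the families: $(1,1,2,4)$ and $(1,4,10,25)$ are of $Q$-type, with $(a,b,c)=(1,1,1)$ and $(1,1,5)$ or similar, and $(1,2,9,12)$ is also of $Q$-type, with $2\cdot1+1+9=12=4\cdot3$; this matches Remark's list.

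The main obstacle is organizing the finite search so that it is genuinely finite and small. I would first try the $2$-adic valuation argument on $s^3=256a_0b_0c$ together with $a_0\mid v^3$ and $b_0\mid u^3$, and fall back on an explicit enumeration over $uv<64$ if necessary.
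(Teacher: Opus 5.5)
Your setup is the same as the paper's. You specialize Case~1 of Proposition~\ref{finite} to $d=2$ and obtain $(ua_0+vb_0)^3=256a_0b_0c$, $uv<64$, $a_0\mid v^3$, $b_0\mid u^3$. You then use $2$-adic valuations to show that exactly one of $a_0,b_0$ is even, with valuation $\equiv1\pmod 3$. But the proposal stops where the content of the proposition lies: the surviving tuples are only predicted (``should be checkable'', ``should be exactly''), never determined. The statement \emph{is} the outcome of this finite search, so you must either run the full enumeration over $uv<64$, $a_0\mid v^3$, $b_0\mid u^3$, or push the $2$-adic analysis further as the paper does. The full enumeration is finite but sizable, not a short table by hand.

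The paper's route: take $a_0$ odd and $\lambda=v_2(b_0)$. If $\lambda\ge4$, then $16\mid ua_0+vb_0$ forces $16\mid u$, so $u=16$ and $v\in\{2,3\}$. For $v=2$ one gets $a_0=1$ and $(1+2b_1)^3=13b_1$, which is impossible. For $v=3$, $(a_0+3b_1)^3-a_0b_1(14a_0+16b_1-1)>0$. If $\lambda=1$, then $u=2u_1$ and $(u_1a_0+vb_1)^3=64a_0b_1c$ with $u_1v<32$, $a_0\mid v^3$ and $b_1\mid u_1^3$ odd. Only at that point is the search genuinely short: seven parameter tuples, yielding the three spaces.

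Second, your identification of the families is partly wrong. $\PP(1,1,2,4)$ is indeed of $Q$-type via $(a,b,c)=(1,1,1)$; it is also of $\PP^2$-type via the Markov triple $(1,1,2)$. $\PP(1,2,9,12)$ is of $Q$-type via $(1,1,3)$. But $\PP(1,4,10,25)$ is not of $Q$-type. The triple $(1,1,5)$ would give weights $(2,1,25,20)$ and violates $2a^2+b^2+c^2=4abc$, since $28\neq 20$. Moreover, no weight in $(1,4,10,25)$ has the form $2a^2$. It is of $\PP^2$-type, coming from the Markov triple $(1,2,5)$, since $1+4+25=3\cdot 10$. The ``Consequently'' clause must be justified with this corrected identification.
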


\begin{proof}
Write $a=2a_0,\ b=2b_0,\ \gcd(a_0,b_0)=1$. Since $X=\PP(1,a,b,c)$ is well-formed and $a,b$ are even, $c$ is odd.
By Lemma~\ref{smoothing}(ii), the generic transverse singularity along the curve determined by the weights $a,b$ is of type $A_1$. Lemma~\ref{smoothing}(iii) therefore gives
$1+c\in \langle a_0,b_0\rangle.$ Thus $1+c=\alpha a_0+\beta b_0$ for some $\alpha,\beta\geq 0$. Set $u=\alpha+2,\ v=\beta+2.$ Then \(u,v\geq 2\), and
$S=ua_0+vb_0.$ The volume equation becomes
\[(ua_0+vb_0)^3=256a_0b_0c,                    
\tag{3.3}\]
where $c=(u-2)a_0+(v-2)b_0-1.$ Since $c<S=ua_0+vb_0$, we have
$256=\frac{(ua_0+vb_0)^3}{a_0b_0c}>\frac{(ua_0+vb_0)^2}{a_0b_0} \geq 4uv.$ Consequently,
$uv<64.$ Reducing (3.3) modulo $a_0$ and $b_0$, respectively, gives
$a_0\mid v^3, \ b_0\mid u^3.$ We now consider the 2-adic valuations $v_2$. Since $c$ is odd, (3.3) gives
\[3 \cdot v_2(ua_0+vb_0)=8+v_2(a_0b_0). \tag{3.4} \]
Because $\gcd(a_0,b_0)=1$, at most one of $a_0,b_0$ is even. They cannot both be odd, since then the right-hand side of (3.4) would be 8, which is not divisible by 3. 
After interchanging $a$ and $b$, we may therefore assume that $a_0$ is odd and $b_0$ is even. Put $\lambda=v_2(b_0)$. Equation (3.4) implies
$\lambda\equiv 1\pmod 3.$ 

First suppose that $\lambda\geq 4$. Then $16\mid b_0$, and (3.4) gives $16\mid ua_0+vb_0$. Since $a_0$ is odd, $16\mid u$. Write
$b_0=16b_1,\ u=16u_1.$ Equations (3.3) and $uv< 64$ become $(u_1a_0+vb_1)^3=a_0b_1c,\ u_1v<4.$ Since $v\geq2$, the only possibilities are $(u_1,v)=(1,2)\ \text{or}\ (1,3).$
If $(u_1,v)=(1,2)$, then $c=14a_0-1.$ Moreover, $a_0\mid v^3=8$, and $a_0$ is odd, so $a_0=1$. We therefore have
$(1+2b_1)^3=13b_1,$ which is impossible for $b_1\geq1$, since
$(1+2b_1)^3-13b_1=8b_1^3+12b_1^2-7b_1+1>0.$ 
If $(u_1,v)=(1,3)$, then $c=14a_0+16b_1-1,$ and hence
$(a_0+3b_1)^3=a_0b_1(14a_0+16b_1-1).$ The difference between the two sides is
$a_0^3-5a_0^2b_1+11a_0b_1^2+a_0b_1+27b_1^3.$ Writing $x=a_0/b_1>0$, this equals
$b_1^3\bigl(x^3-5x^2+11x+27\bigr)+a_0b_1.$ The polynomial
$f(x)=x^3-5x^2+11x+27$ is strictly increasing because $f'(x)=3x^2-10x+11>0$ for every real $x$, and $f(0)=27$. Thus the difference is positive, again a contradiction. 
Therefore $\lambda\geq4$ is impossible.

It remains to consider \(\lambda=1\). Write $b_0=2b_1,$ where $b_1$ is odd. Equation (3.4) gives
$v_2(ua_0+2vb_1)=3.$ In particular, $u$ is even; write $u=2u_1$. Equations (3.3) now become
$(u_1a_0+vb_1)^3=64a_0b_1c.$ Now, $c=(2u_1-2)a_0+(2v-4)b_1-1,$ and $u_1v<32,\ a_0\mid v^3,\ b_1\mid u_1^3.$
Here $a_0,b_1$ are odd and coprime.
The conditions leave only finitely many possibilities:
$1\leq u_1\leq15,\ 2\leq v\leq31,$
with $a_0$ an odd divisor of $v^3$ and $b_1$ an odd divisor of $u_1^3$. This gives exactly the following solutions:
\[\begin{array}{c|c|c|c|c||c|c|c|c|c}
a_0&b_1&u_1&v&c& a_0&b_1&u_1&v&c \\ \hline
1&1&1&3&1& 5&1&1&15&25\\
1&1&2&2&1& 5&1&2&10&25\\
1&3&3&3&9& 5&1&3&5&25 \\
1&3&6&2&9
\end{array}\]
After removing repeated parameterizations and reordering the weights, these give \[\PP(1,1,2,4),\quad
\PP(1,2,9,12),\quad
\PP(1,4,10,25).\] The first and third spaces are of $\PP^2$-type, arising from the Markov triples $(1,1,2)$ and $(1,2,5)$, respectively. The second is of $Q$-type, arising from $(a,b,c)=(1,1,3)$. This proves the proposition.
\end{proof}

\begin{Rmk}\label{algorithm}
Let $X=\PP(1, pa_0, pb_0 ,c), \ \gcd(a_0,b_0)=1,\ \gcd(c,p)=1$, where $p$ is prime, and put
$\delta=\frac{1+c}{p}$. Lemma~\ref{smoothing} shows the smoothing condition along the $A_{p-1}$-curve gives two possibilities:
\[p\delta\in\langle a_0,b_0\rangle
\quad\text{or}\quad
(p-1)\delta\in\langle a_0,b_0\rangle. \]
For each prime $p$, Proposition~\ref{finite} gives an algorithm to find all finite possible cases. 

(i) if $p\delta\in\langle a_0,b_0\rangle$, then it suffices to find all positive integer solutions of  $(a_0,b_0,u,v)$ satisfying: 
$u, v\ge p, uv<16p^2, a_0\mid v^3, b_0\mid u^3,  c=(u-p)a_0+(v-p)b_0-1$, 
$(ua_0+vb_0)^3=64p^2a_0b_0c.$

(ii) if $(p-1)\delta\in\langle a_0,b_0\rangle$, then  find all positive integer solutions of  $(a_0,b_0,u,v)$ satisfying: \\
$p-1\le u,v <16(p-1), a_0\mid pv^3, b_0\mid pu^3$, $c=\frac{p}{p-1}\bigl(a_0(u-p+1)+b_0(v-p+1)\bigr)-1, \ p(ua_0+vb_0)^3=64(p-1)^3a_0b_0c.$
\end{Rmk}
 
We compute for prime \(p\le100\) from Remark~\ref{algorithm} (i), then we get the following list:

\[
\begin{array}{c|l|l}
p	& \text{remaining weight systems}	 & \text{type} \\ \hline
2 	& (1,1,2,4), \ (1,4,10,25) &	\PP^2 \\
&	(1,2,9,12) &	Q\\ \hline
3 &	(1,2,9,12), \ (1,9,50,60)	& Q \\ \hline
5 & (1,4,10,25), \ (1,25,65,169) &	 \PP^2\\ \hline
13 & (1,25,65,169), \ (1,169,442,1156) & \PP^2\\ \hline
17 & (1,50,289,340), \ (1,289,1682,1972) &	Q \\ \hline
89 & (1,1156,3026,7921), \ (1,7921,20737,54289)  &	\PP^2\\
\end{array}\]

If we compute for prime $p\le100$ from Remark~\ref{algorithm} (ii), then we get the following  list:
$\PP(1,4,10,25),\ \PP(1,25,65,169),\ \PP(1,169,442,1156), \ \PP(1, 7921, 20737, 54289)$, which is a subset of the above table.

Hence the second case introduces no additional smoothable weighted projective space up to $p\le 100$.
Every other prime $p\le100$ gives no candidate. So by explicit computation, we have
\begin{Prop}	
Let $X=\PP(1,a,b,c)$ be well-formed, with $\gcd(a,b)=p$, where $p\le100$ is prime. Assume that $X$ admits a $\QQ$-Gorenstein smoothing to $\PP^3$. Then $X$ is of $\PP^2$-type or $Q$-type.
\end{Prop}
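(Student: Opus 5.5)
The plan is to reduce to the finite search of Remark~\ref{algorithm} for each prime $p\le 100$ and then read off the types. Since $p$ is prime it is square-free, so Lemma~\ref{smoothing}(ii) shows that the transverse singularity along $C_{ab}$ is $A_{p-1}$. Well-formedness gives $p\nmid c$, because $\gcd(1,\cdot)$ imposes nothing but $\gcd(a,b,c)=1$ does. The volume equation gives $p\mid 1+c$, exactly as in the proof of Proposition~\ref{finite}. So we may write $a=pa_0$, $b=pb_0$ with $\gcd(a_0,b_0)=1$, and set $\delta=(1+c)/p$. Lemma~\ref{smoothing}(iii) then puts us in one of the two branches (i) or (ii) of Remark~\ref{algorithm}.

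In branch (i) write $1+c=p\delta=\alpha a_0+\beta b_0$ and set $u=\alpha+p$, $v=\beta+p$. The argument of Case~1 of Proposition~\ref{finite} gives
\[p\le u,v<16p,\qquad uv<16p^2,\qquad a_0\mid v^3,\qquad b_0\mid u^3,\]
together with the exact equation $(ua_0+vb_0)^3=64p^2a_0b_0c$, where $c=(u-p)a_0+(v-p)b_0-1$. For each prime $p\le 100$ this is a finite enumeration. We run over pairs $(u,v)$ with $uv<16p^2$, over divisors $a_0\mid v^3$ and $b_0\mid u^3$ with $\gcd(a_0,b_0)=1$, compute $c$, and test the cubic identity exactly in integers. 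Branch (ii) is handled the same way, using the bounds $p-1\le u,v<16(p-1)$ and $a_0\mid pv^3$, $b_0\mid pu^3$, and requiring $c$ to be a positive integer. For each surviving tuple we also check well-formedness and, as a consistency check, condition (ii) of Lemma~\ref{Hilbert identity}.

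The last step is to identify each survivor from the two tables. For the $\PP^2$-type survivors we exhibit the Markov triple, for instance $(1,5,13)$ gives $(1,25,65,169)$ and $(1,13,34)$ gives $(1,169,442,1156)$. For the $Q$-type survivors we exhibit a solution of $2a^2+b^2+c^2=4abc$, for instance $(1,1,3)$ gives $(1,2,9,12)$ and $(1,3,5)$ gives $(1,9,50,60)$. Primes not listed in the tables produce no candidates.

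The main obstacle is that the proof rests on an exact computation, so correctness depends on the enumeration being exhaustive. The key points are that the divisibility bounds $a_0\mid v^3$ (or $pv^3$) really do cover every solution, which follows from reducing the volume equation modulo $a_0$ and $b_0$ using $\gcd(a_0,b_0)=1$, and that the computation is done in exact integer arithmetic. The search size is roughly $\sum_{uv<16p^2}\tau(u^3)\tau(v^3)$, which remains small for $p\le 100$.
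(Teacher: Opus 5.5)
You follow the paper's route: bound everything by the argument of Proposition~\ref{finite}, run the finite search of Remark~\ref{algorithm}, and read off the types. The gap is that the filters you actually impose are too weak, so your final identification step fails as written.

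Your search keeps $\PP(1,6,9,32)$ at $p=3$. Take $a_0=2$, $b_0=3$, $c=32$. Then $1+c=33=0\cdot 2+11\cdot 3$, so $(u,v)=(3,14)$. All your conditions hold: $uv=42<144$, $2\mid 14^3$, $3\mid 3^3$, $c=(u-3)a_0+(v-3)b_0-1$, and $48^3=64\cdot 9\cdot 2\cdot 3\cdot 32$.

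It also keeps $\PP(1,22,32,121)$ at $p=11$. Take $a_0=2$, $b_0=11$, $(u,v)=(11,14)$; then $176^3=64\cdot 121\cdot 2\cdot 11\cdot 32$.

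Both spaces are well-formed. Both have only $A$-type codimension-two singularities and satisfy (H), and hence condition (ii) of Lemma~\ref{Hilbert identity}. The paper records exactly this in its remark on the ten surviving cases from DeVleming--Li--Torres. Neither space is of $\PP^2$-type or $Q$-type. So, for the search you describe, it is false that the survivors are exactly the tabulated ones, and false that untabulated primes such as $p=11$ give no candidates. The paper's tables come from a search that also imposes smoothing conditions on the other singular edges.

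The missing ingredient is to apply Lemma~\ref{smoothing}(iii) to every coordinate curve $C_{ij}$ with $g_{ij}>1$, not only to $C_{ab}$. For square-free $g_{ij}$, the $A$-type hypothesis comes from Lemma~\ref{smoothing}(ii). Otherwise you must first check $g_{ij}\mid w_k+w_\ell$ before applying (iii).

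Here the gcd-$2$ edge removes both spurious survivors:
\begin{itemize}
\item For the pair $(6,32)$, the complementary sum is $1+9=10\notin\langle 3,16\rangle$.
\item For the pair $(22,32)$, the complementary sum is $1+121=122\notin\langle 11,16\rangle$.
\end{itemize}
Equivalently, each space has a pair of weights with gcd $2$, so the gcd-$2$ proposition excludes it after reordering.

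There is also a minor slip in your identification step. The triple $(1,3,5)$ does not satisfy $2a^2+b^2+c^2=4abc$; the triple giving $(1,9,50,60)$ is $(a,b,c)=(5,1,3)$.
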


\section{Main theorem}

\begin{Thm}\label{main}
Let $X=\PP(1,a,b,c)$ be a well-formed weighted projective space. Suppose $d:=\gcd(a,b)$ and $a=d^2$. Assume every codimension-two singularity of $X$ is $A$-type. If $X$ admits a $\QQ$-Gorenstein smoothing to $\PP^3$, then $X$ is of $\PP^2$-type or $Q$-type.
\end{Thm}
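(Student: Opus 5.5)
We set up the notation first. Write $b=db_0$ with $\gcd(d,b_0)=1$; this holds because $\gcd(a,b)=\gcd(d^2,db_0)=d$. Well-formedness of $(1,d^2,db_0,c)$ gives $\gcd(c,d)=1$. The only possibly nontrivial pairwise gcds are therefore
\[g_{ab}=d,\qquad g_{bc}=\gcd(b_0,c)=:e,\]
since $\gcd(d^2,c)=1$ and every gcd with the weight $1$ is trivial.

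\textbf{Step 1: the volume equation.} The volume equation of Lemma~\ref{Hilbert identity}(i) reads
\[S^3=64d^3b_0c,\qquad S=1+d^2+db_0+c.\]
Since $S^3/(4d)^3=b_0c$ is an integer, $4d\mid S$; I write $S=4dm$, which gives $b_0c=m^3$. The $A$-type hypothesis along the edge $(a,b)$ says $d\mid 1+c$, and indeed $1+c=d(4m-d-b_0)$ follows directly from $S=4dm$.

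\textbf{Step 2: the Hilbert identity.} I substitute into Lemma~\ref{linear}, using
\[\sum_{i<j}(w_i-w_j)^2=4\sum_i w_i^2-S^2 .\]
The edge $(a,b)$ contributes $(d^2-1)\cdot c$ to the right-hand side of (H), and the edge $(b,c)$ contributes $(e^2-1)\cdot d^2$. I then eliminate $d^2$ via $d^2=4dm-db_0-1-c$ and divide by the appropriate power of $d$. The expected outcome is the quadratic relation stated in the introduction,
\[b_0^2-4mb_0+6m^2-2c-e^2=0 .\]
This step is routine algebra, and I will check it carefully, because the exact form of this relation drives everything that follows.

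\textbf{Step 3: reduction to Thue equations.} The core arithmetic uses $p$-adic valuations. For a prime $p$, compare $v_p(b_0)$, $v_p(c)$ and $v_p(m)$ under $b_0c=m^3$ and the quadratic relation. The aim is to show that $e\mid m$; the point is that if $v_p(e)>v_p(m)$, the terms $b_0^2$, $6m^2$, $2c$, $e^2$ have incompatible valuations. Once $e\mid m$, the coprimality structure of $b_0c=m^3$ lets me write $b_0$ and $c$ in terms of coprime cube-type parameters, and substituting back yields
\[F(U,V)=U^3-4U^2V+6UV^2-2V^3\in\{1,2\}\]
for suitable integers $U,V$ built from $b_0$, $m$, $e$. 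The main obstacle is to organize this case split (the factor $2$ in front of $c$, together with possible powers of $2$ dividing $e$, is what produces the two right-hand sides) and then to solve the two cubic Thue equations completely. For the solution I would invoke the standard effective resolution of Thue equations, i.e.\ Baker-type bounds plus reduction, or a computer algebra verification, citing the explicit solution lists. Only a small number of solutions is expected, for instance $(U,V)=(1,0),(1,1)$ and a few others.

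\textbf{Step 4: identifying the branches.} Each solution $(U,V)$ gives a branch parametrizing $(b_0,c,m)$, and hence $d$ through $1+c=d(4m-d-b_0)$. I expect three branches. The first reproduces $\PP(d^2,\ast,\ast,\ast)$ with a Markov-type relation $x^2+y^2+z^2=3xyz$, which is the $\PP^2$-type. The second gives $2x^2+y^2+z^2=4xyz$, which is the $Q$-type. For the remaining branch I apply Lemma~\ref{smoothing}(iii) on the curve $C_{ab}$: with $\delta=(1+c)/d$, I check that neither $d\delta$ nor $(d-1)\delta$ lies in $\langle d,b_0\rangle$, presumably via a size or congruence obstruction specific to that branch. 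This exclusion completes the proof.
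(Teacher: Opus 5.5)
Your Steps 1--3 follow the paper's route: $S=4dm$ and $b_0c=m^3$ from the volume equation, then $b_0^2-4mb_0+6m^2-2c-e^2=0$ from (H), then the valuation argument for $e\mid m$. After that come $m=en$, $b_0=er_0$, $c=es$, $2s=et$, and $r_0t=2n^3$ with $\gcd(r_0,t)=1$, leading to $F(x^2,y)=1$ or $F(2x^2,y)=2$. The solution $(1,0)$ you mention is irrelevant because $V=y>0$. The positive solutions are $(1,1)$ and $(1,2)$ for $F=1$, and $(2,1)$ for $F=2$. The genuine gap is in Step 4.

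The third branch comes from $(U,V)=(1,2)$. It gives $b_0=q$, $c=8q^2$, $m=2q$, so $X=\PP(1,d^2,dq,8q^2)$ with $1+d^2+8q^2=7dq$; examples are $\PP(1,6,9,32)$ and $\PP(1,22,32,121)$.

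Your plan to exclude this branch via Lemma~\ref{smoothing}(iii) on $C_{ab}$ cannot work. On that curve the semigroup is $\langle a/d,b/d\rangle=\langle d,b_0\rangle$, so $d\delta=\delta\cdot d$ lies in it automatically. The smoothing-section condition along the gcd-$d$ curve is therefore vacuous for every $\PP(1,d^2,db_0,c)$, and no size or congruence obstruction can exist there. For instance, for $\PP(1,6,9,32)$ one has $33=11\cdot 3\in\langle 3,2\rangle$.

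You must instead use the other singular curve, whose gcd is $g_{bc}=e=q$. Here $q\ge 2$, since $q=1$ gives $d^2-7d+9=0$. Its complementary weights are $1$ and $d^2$, with $1+d^2=q(7d-8q)$, so $\delta'=7d-8q$ and the semigroup is $\langle d,8q\rangle$. Note that $d$ is odd and coprime to $q$, because $\gcd(d,8q^2)=1$.

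\begin{itemize}
\item If $1+d^2=du+8qv$, reducing mod $q$ gives $q\mid u$. Writing $u=qu_0$ and dividing by $q$ yields $(7-u_0)d=8(q+v)$. This forces $8\mid 7-u_0$ with $1\le 7-u_0\le 7$, which is impossible.
\item If $(q-1)(7d-8q)=du+8qv$, reducing mod $q$ gives $u=qk-7$ with $k\ge1$. Dividing by $q$ then yields $(7-k)d=8(q+v-1)$ with $1\le 7-k\le 6$, again impossible.
\end{itemize}

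For $\PP(1,6,9,32)$ this curve test indeed fails: $10\notin\langle 3,16\rangle$ and $5\notin\langle 3,16\rangle$. This is how the paper closes the proof.
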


\begin{proof}
Write $b=db_0$. We have $\gcd(d,b_0)=1$ and 
$\gcd(d,c)=1$. Write $S=1+d^2+db_0+c$. The volume equation gives $S^3=64d^3b_0c$. So there is an integer $m>0$ such that $S=4dm$ and $b_0c=m^3$. Let $e=\gcd(b_0,c).$

\n {\bf Claim 1.}
We get the following equation 
\[b_0^2-4mb_0+6m^2-2c-e^2=0. \tag{1}\]

\begin{proof}[Proof of Claim 1]
Lemma~\ref{linear} shows that
\[ \sum_{i<j}(w_i-w_j)^2=
8\left((d^2-1)c+(e^2-1)d^2\right).
\tag{2} \]
Use the identity
$\sum_{i<j}(w_i-w_j)^2
=3S^2-8\sum_{i<j}w_iw_j$, Therefore (2) becomes
\[\begin{aligned}
3S^2
&-8\left(
d^2+db_0+c+d^3b_0+d^2c+db_0c
\right)\\
&=8\left((d^2-1)c+(e^2-1)d^2\right).
\end{aligned} \]

Since $S=4dm$, it follows that
\[6d^2m^2-\left(d^2+db_0+c+d^3b_0+d^2c+db_0c\right)
=(d^2-1)c+(e^2-1)d^2.\] 
Hence $d^2(6m^2-2c-e^2)-db_0(1+d^2+c)=0.$
By the relation
$1+d^2+db_0+c=4dm$, we have 
\[d^2(6m^2-2c-e^2)
-db_0\cdot d(4m-b_0) =d^2 (6m^2-2c-e^2-4mb_0+b_0^2)=0. \]
Since $d > 0$, the Claim 1 is verified. 
\end{proof}

\n {\bf Claim 2.} We have $e|m$. 
\begin{proof}[Proof of Claim 2]
Let $p$ be a prime and set $\beta=v_p(b_0), \gamma=v_p(c), \mu=v_p(m)$.
Since $b_0c=m^3$,
we have $\beta+\gamma=3\mu$. Also, because $e=\gcd(b_0,c)$,
$\epsilon:=v_p(e)=\min\{\beta,\gamma\}$. We want to prove $\epsilon \le \mu.$ 
Suppose on the contrary that $\epsilon> \mu$.
Then both $\beta>\mu$ and $\gamma>\mu$. From $\beta+\gamma=3\mu$, we obtain $\gamma=3\mu-\beta<2\mu$. 

\medskip
\n
Case 1: $p$ is odd.\\
In this case, one can easily see that 
$v_p(6m^2-e^2-4mb_0+b_0^2) \ge 2 \mu$. However, $v_p(2c)=\gamma < 2\mu$. This is the required contradiction. 

\medskip 
\n
Case 2: $p=2$.\\
Now $v_2(2c)=\gamma+1$. One can also easily see that 
$v_2(6m^2-e^2-4mb_0+b_0^2) \ge 2 \mu+1$. This is again a contradiction.

 Hence, we get $v_p(e)  \le v_p(m) $ for any prime factor of $m$. It follows that $e |m$.
\end{proof}

We write $m=en$, $b_0=er_0$, $c=es$ with 
$\gcd(r_0,s)=1$. By Claim 1, $e^2|2c$. Hence we may write $2s=et$. 
 Then $r_0t=2n^3$ and 
\[r_0^2-4nr_0+6n^2-t-1=0. \tag{3}  \]
We have $\gcd(r,t)=1$;
for odd primes this follows form $2s=et$ and $\gcd(r_0, s)=1$. The prime 2 comes from the equation (3). Reducing (3) modulo 2 shows that if $r_0$ is even then $t$ is odd. 

Consequently, either
\[ (r_0=x^3,\ t=2y^3,\ n=xy), \quad \text{or} \quad (r_0=2x^3,\ t=y^3,\  n=xy) \tag{4} \]
for some positive integers $x$ and $y$. Let
\[ F(U,V)=U^3-4U^2V+6UV^2-2V^3. \]

Substituting the two cases in (4) into (3) gives, respectively
\[ F(x^2,y)=1 
\quad\text{or}\quad
F(2x^2,y)=2. \]

\n {\bf Claim 3.} The positive integral solutions of $F(U,V)=1$ are $(U,V)=(1,1),(1,2)$, and the unique positive integral solution of $F(U,V)=2$ is
$(U,V)=(2,1)$.
\begin{proof}[Proof of Claim 3.]
The homogeneous polynomial $F(U,V)$ is an irreducible binary cubic form over $\QQ$ of degree $\ge 3$. So these are Thue equations. By Thue’s theorem (cf. \cite{Thue}), there are only finitely many integral solutions to the equation $F(U,V)=c$ for any non-zero integer $c$.
An exact computation using Magma’s {\it Thue and Solutions} functions gives all integral solutions; filtering for $U,V>0$ gives the above lists. The computation uses the Bilu–Hanrot Thue-equation algorithm \cite{BH-T}.
\end{proof}

\noindent Case 1. $b_0=m,\ c=m^2$.\\
The Markov equation $1+d^2+m^2=3dm$ follows from $1+d^2+db_0+c=4dm$, which is the volume equation. Therefore
\[X=\PP(1,d^2,dm,m^2),\]which is of $\PP^2$-type.

\noindent Case 2. $b_0=2m,\ c=m^2/2$.\\
The gcd requirement $e=m$ gives
$m=2x$ where $x$ is odd.
The Markov equation $1+d^2+2x^2=4dx$ follows from $1+d^2+db_0+c=4dm$, which is the volume equation. Moreover,
\[X=\PP(1,d^2,4dx,2x^2).\]
After reordering,
$X=\PP(2x^2,1,d^2,4xd),$ which is $Q$-type.

\noindent Case 3. $b_0=m/2,\ c=2m^2$.\\
Write $m=2q$. Then $b_0=q$ and $c=8q^2$. Along the gcd-\(q\) curve determined by the weights \(dq,8q^2\), the smoothing-section condition gives
two alternatives by Lemma~\ref{smoothing} (iii), $1+d^2=du+8qv, \ u,v\ge0$, or $(q-1)(7d-8q)=du+8qv,$  for $u,v\ge0$. This numerical branch already satisfies the Hilbert identity. 
We now show that neither alternative in Lemma~\ref{smoothing} (iii) can occur.

We consider the first case. If $q=1$ then $d^2-7d+9=0$, which has no integral solution.
To prove $q\mid u$, we explicitly use $1+d^2=q(7d-8q$), so that $q\mid1+d^2$. Reducing $1+d^2=du+8qv$ gives $q\mid u$. Write $u=qu_0$. Dividing by $q$,
\[(7-u_0)d=8(q+v). \tag{5}\] The right-hand side is positive, so $0\le u_0\le6$. Since $\gcd(d,8)=1$, equation (5) implies $8\mid 7-u_0$,
which is impossible because $1\le7-u_0\le7$. Thus it has no global smoothing direction.

Suppose
$(q-1)(7d-8q)=du+8qv$, for $u,v\ge 0$. Reducing modulo $q$ gives
$-7d\equiv du\pmod q.$ Since \(\gcd(d,q)=1\), $u\equiv-7\pmod q.$ Hence
$u=qk-7$ for some integer $k\ge1$. Substituting this into the equation $(q-1)(7d-8q)=du+8qv$ gives
$(q-1)(7d-8q)=d(qk-7)+8qv$.
After expansion, cancellation, and division by $q$, we obtain
\[(7-k)d=8(q+v-1).\tag{6}\]The right-hand side is positive, so $1\le k\le6$. Moreover, $\gcd(d,8)=1$, because $\gcd(d,8q^2)=1$. Equation (6) therefore implies $8\mid7-k.$ But $1\le7-k\le6$, which is impossible.
\end{proof}

\begin{Rmk}
In the paper, we did not use compatibility of the local deformations at the coordinate points. The deformation-gluing results of DeVleming–Li–Torres \cite{DLT} provide a framework for studying exactly these codimension-three compatibility conditions, but not the desired numerical classification.
\end{Rmk}

\begin{Rmk}
Let $X=\PP(1, a, b, c)$ and $\gcd(a,b)=d, \ \gcd(b,c)=e,\ \gcd(a,c)=1$. Write
$a=da_0,\ b=deb_0,\ c=ec_0$. The gcd conditions imply
$\gcd(a_0,eb_0)=\gcd(db_0,c_0)=\gcd(da_0,ec_0)=1$. In particular,
$\gcd(d,e)=1,\ \gcd(a_0,b_0)=\gcd(b_0,c_0)=\gcd(a_0,c_0)=1.$

Put $S=1+da_0+deb_0+ec_0.$ The volume equation gives $S^3=64d^2e^2a_0b_0c_0.$ Consequently,
$S=4m, \ \ m^3=d^2 e^2 a_0 b_0 c_0$ for some positive integer $m$. Then as in the proof of Theorem~\ref{main}, Hilbert identity gives the equation
$b_0^2-4\rho b_0+6\rho^2-\alpha-\gamma-\alpha\gamma=0,$ where
$\rho=\frac{m}{de},\
\alpha=\frac{a_0}{d},\
\gamma=\frac{c_0}{e}$ are positive rational numbers.
When $a=d^2$, one has $a_0=d$, hence $\alpha=1$. That is precisely the specialization which makes the argument in Theorem~\ref{main} reduce to cubic Thue equations. In the general case, both $\alpha$ and $\gamma$ vary.
If one could prove $m=de$, the resulting equation would be substantially more restrictive and may reduce the problem to the two expected families. At present, however, we do not know how to prove $m=de$.
\end{Rmk}


\end{document}